\newtheorem{theorem}{Theorem}
\newtheorem{proposition}[theorem]{Proposition}
\newtheorem{lemma}[theorem]{Lemma}
\newtheorem{definition}[theorem]{Definition}
\numberwithin{figure}{section}
\newtheorem{remark}[theorem]{Remark}
\renewcommand{\H}{\mathcal{H}}
\newcommand{\CC}{\mathbb{C}}
\newcommand{\RR}{\mathbb{R}}
\newcommand{\ZZ}{\mathbb{Z}}
\newcommand{\WW}{\mathbb{W}}
\newcommand{\VV}{\mathbb{V}}
\newcommand{\U}{{\rm{U}}}
\newcommand{\oo}{\mathcal{O}}
\numberwithin{equation}{section}
\numberwithin{theorem}{section}
\numberwithin{table}{section}
\numberwithin{table}{section}
\begin{document}
\bibliographystyle{amsalpha} 
\title[Local moduli of scalar-flat K\"ahler ALE surfaces]{Local moduli of scalar-flat K\"ahler ALE surfaces}
\author{Jiyuan Han}
\address{Department of Mathematics, Purdue University, West Lafayette,  
IN, 47907}
\email{han556@purdue.edu}
\author{Jeff A. Viaclovsky}
\address{Department of Mathematics, University of California, Irvine, 
CA, 92697}
\email{jviaclov@uci.edu}
\thanks{The authors were partially supported by NSF Grant DMS-1811096.}
\date{September 14, 2018}
\begin{abstract} 
In this article, we give a survey of our construction of a local moduli space of scalar-flat K\"ahler ALE metrics in complex dimension $2$. We also prove an explicit formula for the dimension of this moduli space on a scalar-flat K\"ahler ALE surface which deforms to the minimal resolution of 
$\CC^2/\Gamma$, where $\Gamma$ is a finite subgroup of ${\rm{U}}(2)$ without complex reflections, in terms of the embedding dimension of the singularity. 
 \end{abstract}
\maketitle
\section{Introduction}
\label{intro}
In this article, the main objects of interest will be a certain class of complete non-compact K\"ahler metrics. In the following,  $\Gamma$ will always be a finite subgroup of ${\rm{U}}(2)$ containing no complex reflections.
\begin{definition}
Let  $(X, g,J)$ be a K\"ahler surface $(X, g,J)$ of complex dimension $2$, with metric $g$ and complex structure $J$.
We say that $(X,g,J)$ is asymptotically locally Euclidean (ALE) if there exists a compact subset $K\subset X$, a real number $\mu >0$, and a diffeomorphism 
$\psi: X \setminus K\rightarrow (\RR^4\setminus \overline{B})/\Gamma$, such that for each multi-index $\mathcal{I}$ of order~$|\mathcal{I}|$
\begin{align}
\partial^{\mathcal{I}}(\psi_*(g)-g_{Euc}) = O(r^{-\mu-|\mathcal{I}|}),
\end{align}
as $r \rightarrow \infty$. 
In the above, $B$ denotes a ball centered at the origin, and $g_{Euc}$ denotes the Euclidean metric. 
\end{definition}
The number $\mu$ is referred to as the order of $g$. It was shown in \cite{HL15} that for any ALE K\"ahler metric of order $\mu$, there exist ALE coordinates for which 
\begin{align}
\partial^{\mathcal{I}} (J - J_{Euc})= O(r^{-\mu-|\mathcal{I}|}),
\end{align}
for any multi-index $\mathcal{I}$ as $r \rightarrow \infty$, where $J_{Euc}$ is the standard complex structure on Euclidean space. This follows because the K\"ahler assumption implies that $J$ is parallel.

In this definition, we only assumed that the metric is K\"ahler. 
A natural condition is that the metric be in addition scalar-flat. 
Such metrics are then {\it{extremal}} in the sense of Calabi \cite{Calabi85}.
These spaces arise naturally as ``bubbles'' in orbifold compactness theorems for  sequences of extremal K\"ahler metrics 
\cite{Anderson, BKN, CW, CLW, Nakajimaconv, Tian, TV, TV2, TV3}. 
Furthermore, they arise in a number of natural  gluing constructions for extremal K\"ahler metrics \cite{ALM_kummer, ALM_resolution, Arezzo06, APSinger, BiquardRollin, GaborI, RollinSinger, RS09}. 

 We note that in the case of scalar-flat K\"ahler ALE metrics, it is known that there exists an ALE coordinate system for which the order of such a metric is at least $2$ \cite{LeBrunMaskit}. 

There are many known examples of scalar-flat K\"ahler ALE metrics:

\begin{itemize}

\item ${\rm{SU}}(2)$ case: when $\Gamma\subset {\rm{SU}}(2)$, Kronheimer has constructed families of hyperk\"ahler ALE metrics \cite{Kr89} on manifolds diffeomorphic to the 
minimal resolution of $\CC^2 / \Gamma$. In \cite{KrII89}, Kronheimer also proved a Torelli-type theorem classifying hyperk\"ahler ALE surfaces. In the $A_k$ case, these metrics were previously discovered by Eguchi-Hanson for $k=1$ \cite{EguchiHanson}, and 
by Gibbons-Hawking for all $k \geq 1$ \cite{GibbonsHawking}. 

\vspace{2mm}
\item Cyclic case: For the $\frac{1}{p} (1,q)$-action, Calderbank-Singer constructed a family of scalar-flat K\"ahler ALE metrics on the minimal resolution of any cyclic quotient singularity \cite{CalderbankSinger}. These metrics are toric and come in families of dimension $k-1$, where $k$ is the length of the corresponding Hirzebruch-Jung algorithm.  For $q=1$ and $q=p-1$, these metrics are the LeBrun negative mass metrics
and the toric multi-Eguchi-Hanson metrics, respectively \cite{LeBrunnegative, GibbonsHawking}.

\vspace{2mm}
\item Non-cyclic non-${\rm{SU}}(2)$ case: The existence of scalar-flat K\"ahler metrics on the minimal resolution of $\CC^2/\Gamma$, was shown by Lock-Viaclovsky \cite{LV14}.

\end{itemize}

 A natural question is whether the scalar-flat K\"ahler property is preserved under small deformations of complex structure.
In \cite{HVI},  we showed that for any scalar-flat K\"ahler ALE surface, 
all small deformations of complex structure admit scalar-flat 
K\"ahler ALE metrics, and so do all small deformations of the K\"ahler class. An informal statement is the following. 
\begin{theorem}
Let $(X,g,J)$ be a scalar-flat K\"ahler ALE surface. 
Then there is a finite-dimensional family $\mathfrak{F}$ of scalar-flat K\"ahler ALE metrics near $g$,  parametrized by a small ball in $\RR^d$, for some integer $d$.
This family $\mathfrak{F}$ is  ``versal'' in the following sense: it contains all possible scalar-flat K\"ahler ALE metrics ``near'' to the given scalar-flat K\"ahler ALE metric, up to diffeomorphisms which are sufficiently close to the identity. 
\end{theorem}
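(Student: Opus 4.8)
The plan is to carry out a Kuranishi-type construction adapted to the noncompact ALE setting, pairing a deformation of the complex structure with the solution of the scalar-flat equation for a compatible K\"ahler form, all within weighted H\"older spaces $C^{k,\alpha}_{\delta}$ on $X$. It is convenient to recall that in real dimension $4$ a K\"ahler metric is scalar-flat precisely when it is anti-self-dual, so the scalar-flat K\"ahler condition may be probed either through the scalar curvature operator or through the self-dual Weyl curvature; I will work with the scalar curvature, for which the governing linearization at a scalar-flat K\"ahler metric is the Lichnerowicz operator $\mathcal{D}^{*}\mathcal{D}$, whose kernel consists of potentials whose $(1,0)$-gradient is a holomorphic vector field.

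First I would set up the local model for ALE deformations of $J$: integrable almost complex structures near $J$ which decay to $J_{Euc}$ along the end, taken modulo diffeomorphisms close to the identity. Fredholm theory for the associated Dolbeault-type elliptic complex on the ALE manifold---for a weight $\delta$ chosen to avoid the indicial roots of the Laplacian on the cone $\RR^{4}/\Gamma$---shows that the space of infinitesimal such deformations is finite-dimensional, and the genuine deformations form the zero set of a finite-dimensional Kuranishi obstruction map. Next, for each complex structure $J'$ in this model I would analyze the scalar curvature as a function of the K\"ahler form ranging over the K\"ahler cone of $(X,J')$, together with the scaling parameter and the action of diffeomorphisms near the identity. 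The analytic heart of the matter is that $\mathcal{D}^{*}\mathcal{D}$, acting between the appropriately weighted H\"older spaces, is Fredholm with closed range, with finite-dimensional kernel (the decaying holomorphic vector fields) and finite-dimensional cokernel.

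The family $\mathfrak{F}$ is then produced as follows. One assembles the complex-structure parameters from the Kuranishi model, the K\"ahler-class parameters, and the scaling parameter into a finite-dimensional space, solves the scalar-flat equation modulo its finite-dimensional cokernel by the implicit function theorem, and thereby obtains a smooth (indeed real-analytic) map $\Phi$ from a small ball in $\RR^{N}$ into a finite-dimensional obstruction space; after passing to a slice for the diffeomorphism-group action and removing the residual finite symmetry, the zero set $\Phi^{-1}(0)$ is parametrized by a small ball in $\RR^{d}$, and this is $\mathfrak{F}$. Versality is then a gauge-fixing argument: given any scalar-flat K\"ahler ALE metric $g'$ sufficiently close to $g$, one uses a diffeomorphism close to the identity to move its complex structure into the Kuranishi slice and to normalize its K\"ahler form and scale, identifies the resulting data with the unique corresponding solution coming from $\Phi$, and concludes that $g'$ agrees, up to a small diffeomorphism, with a member of $\mathfrak{F}$.

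The principal obstacle is the analysis on the ALE end. One must locate the indicial roots of the relevant Laplace-type operators on $\RR^{4}/\Gamma$ and choose $\delta$ so that, simultaneously, the deformation operator and $\mathcal{D}^{*}\mathcal{D}$ are Fredholm with closed range, the solutions one constructs decay fast enough to remain ALE of the prescribed order, and no spurious kernel or cokernel elements are introduced by the noncompact geometry. A further difficulty is the coupling between the two problems: the complex-structure deformation problem is itself potentially obstructed, so one must verify that the scalar-flat equation can be solved over the Kuranishi base without destroying integrability, and must keep precise track of which parameter directions change the metric only by a diffeomorphism (and of the scaling direction) in order to extract the genuine dimension $d$.
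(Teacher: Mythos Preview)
Your overall architecture---Kuranishi theory for complex structures in weighted spaces, followed by a LeBrun--Simanca/Lichnerowicz analysis for the scalar-flat equation, glued together by gauge fixing---is exactly the strategy the paper follows. However, several of the steps you describe as routine are precisely where the analytic content lies, and as written your argument has real gaps.

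\medskip

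\textbf{Unobstructedness is asserted, not proved.} You write that ``the zero set $\Phi^{-1}(0)$ is parametrized by a small ball in $\RR^{d}$'', but a Kuranishi zero set need not be smooth, let alone a ball. The theorem claims a ball, so one must show both obstruction spaces vanish. The paper does this in two separate places: for complex structures, the relevant obstruction space $\H_{\tau}(X,\Lambda^{0,2}\otimes\Theta)$ is shown to vanish by a nontrivial argument using the complex analytic compactification of $(X,J)$ due to Hein--LeBrun--Maskit; for the scalar-flat equation, the linearization $f\mapsto -(\bar\partial\bar\partial^{\#})^{*}(\bar\partial\bar\partial^{\#})f$ is shown to be \emph{surjective} between weighted spaces with weight $0<\epsilon<1$ (so there is no cokernel to solve ``modulo''). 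Neither vanishing follows from generic Fredholm theory or indicial-root bookkeeping alone.

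\medskip

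\textbf{K\"ahler stability is missing.} Between deforming $J$ and running the scalar curvature argument, one needs to know that each nearby integrable $J'$ in the slice actually admits an ALE K\"ahler form close to the original one. This is the ALE analogue of the Kodaira--Spencer stability theorem and requires its own argument (the paper adapts ideas of Biquard--Rollin). You pass directly from ``complex structure $J'$'' to ``K\"ahler cone of $(X,J')$'' without addressing this.

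\medskip

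\textbf{The refined gauge step is absent.} The divergence-free slice you describe does not by itself give the correct parameter count: there are decaying elements of $\H_{-3}(X,\Lambda^{0,1}\otimes\Theta)$ of the form $J\circ\mathfrak{L}_ZJ$ with $Z$ a vector field of \emph{linear} growth (coming from Euclidean motions of the ALE chart). These are genuine kernel elements for the gauged linearization but are not true moduli directions. The paper introduces the spaces $\WW$, $\VV$ and the ``essential'' subspace $\H_{ess}$ precisely to quotient these out; without this step the family you build is too large and versality fails in the form stated.

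\medskip

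Finally, two minor points: the ``scaling parameter'' you introduce is not part of the construction (scalings are already inside the K\"ahler-class parameter $\H_{-3}(X,\Lambda^{1,1})$), and in this ALE setting there are no nonzero decaying holomorphic vector fields, so the kernel of $\mathcal{D}^{*}\mathcal{D}$ on the relevant weighted space is trivial---which is good news, but different from what you wrote.
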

  A more precise statement of this theorem can be found in Section \ref{construction} below. The family $\mathfrak{F}$  is not ``universal'' since it is possible that $2$ metrics in $\mathfrak{F}$ could be isometric. However,  the orbit space of the group of biholomorphic isometries does give a universal moduli space, an informal statement of which is the following. 
\begin{theorem} 
The group  $\mathfrak{G}$ of holomorphic isometries of $(X,g,J)$ acts on $\mathfrak{F}$, and  each orbit represents a unique isometry class
of metric up to the action of diffeomorphisms which are sufficiently close the the identity. 
\end{theorem}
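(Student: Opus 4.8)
The plan is to combine the versality statement of the previous theorem with a slice-theorem argument that identifies the isometry ambiguity inside $\mathfrak{F}$ with the $\mathfrak{G}$-action. First I would record that the construction of $\mathfrak{F}$ in \cite{HVI} is natural with respect to the symmetries of the central metric: the isometry group of a complete K\"ahler ALE surface is a compact Lie group, it acts linearly on the finite-dimensional space of infinitesimal deformations used to parametrize $\mathfrak{F}$ (harmonic-type representatives of the relevant cohomology together with variations of the K\"ahler class), and the linearized operators, the projections onto the obstruction spaces, and the gauge-fixing (transversality) conditions are all built canonically from $g$ and $J$, hence commute with this action. Averaging the relevant complementary subspaces over the compact group $\mathfrak{G}$, the zero set $\mathfrak{F}$ of the obstruction map becomes $\mathfrak{G}$-invariant, so $\mathfrak{G}$ acts on $\mathfrak{F}$; this is the first step.

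Next I would reduce the theorem, using versality, to the assertion that if $g_1,g_2\in\mathfrak{F}$ and $\Phi$ is a diffeomorphism of $X$ sufficiently close to the identity with $\Phi^{*}g_2=g_1$, then $g_1$ and $g_2$ lie in a single $\mathfrak{G}$-orbit. To analyze $\Phi$ I would first pin down its asymptotics: an isometry between ALE metrics is, in suitable ALE coordinates, asymptotic to an element of the normalizer of $\Gamma$ in ${\rm SO}(4)$; since $\Phi$ is close to the identity, its asymptotic model is close to the identity in that normalizer modulo $\Gamma$, and after composing with the holomorphic isometry realizing that model (an element of $\mathfrak{G}$) one reduces to the case in which $\Phi$ is asymptotic to the identity with the expected decay rate. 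Then the standard slice/inverse-function-theorem argument (as in Ebin's slice theorem and its adaptations to Einstein and scalar-flat K\"ahler deformation theory) applies: writing the diffeomorphism orbit of $g_2$ near $g$ as a graph over the gauge directions, the fact that both $g_1$ and $g_2$ satisfy the transversality conditions defining the slice containing $\mathfrak{F}$ forces the gauge component of $\Phi$ to vanish, so $\Phi$ is an honest isometry of $g$, i.e.\ $\Phi$ lies in the full real isometry group $\mathfrak{G}_{\RR}$ of $(X,g)$.

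It then remains to upgrade $\Phi$ to a holomorphic isometry. Here $\Phi^{*}J_2$ and $J_1$ are both $g_1$-compatible integrable complex structures close to $J$, and both arise from the gauge-fixed slice of complex structures used to build $\mathfrak{F}$; comparing them via the uniqueness in the implicit function theorem shows they agree after composing $\Phi$ with a further isometry preserving $(g,J)$, so that $g_1$ and $g_2$ end up in the same $\mathfrak{G}$-orbit. In the non-degenerate case $\mathfrak{G}_{\RR}$ already preserves $J$; in the hyperk\"ahler case, where the metric admits a two-sphere of compatible complex structures, this last step is where the rotation of complex structures enters, and one must verify that the construction of $\mathfrak{F}$ is arranged so that the residual identification is still realized by an element of $\mathfrak{G}$ rather than by something strictly larger.

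I expect this last point to be the main obstacle: Riemannian isometries of scalar-flat K\"ahler surfaces need not be holomorphic, and the complex structure is not metrically rigid, so one must carefully track exactly which compatible complex structures occur in $\mathfrak{F}$ and show that the ambiguity among them is precisely the $\mathfrak{G}$-action. A secondary technical point is making the ``uniqueness within the slice'' step of the inverse-function-theorem argument rigorous in the weighted function spaces on the ALE end, which relies on the sharp decay estimates for $\Phi$ and for Killing fields in the ALE setting and on the order-$2$ improvement for scalar-flat K\"ahler ALE metrics from \cite{LeBrunMaskit}.
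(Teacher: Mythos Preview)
Your first step---showing that $\mathfrak{G}$ acts on $\mathfrak{F}$ by checking that every ingredient of the construction (the $\square$-harmonic spaces, the divergence-free gauge, the implicit function theorem inputs) is built canonically from $(g,J)$ and hence is $\mathfrak{G}$-equivariant---is exactly what the paper does; it says the theorem is proved ``by keeping track of the action of $\mathfrak{G}$ in every step of the proof of Theorem~\ref{tmain}.''

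Where you diverge is in the uniqueness half. The paper's precise version (Theorem~\ref{t2}) is sharper than the target you set yourself: it asserts that two metrics in $\mathfrak{F}$ which are isometric by a \emph{small} diffeomorphism are \emph{equal}, not merely in the same $\mathfrak{G}$-orbit. Here ``small'' has the specific meaning of Definition~\ref{smalldef}: $\Phi=\Phi_Y$ with $Y\in C^{k+1,\alpha}_{\delta+1}$, so $\Phi$ already decays to the identity at infinity and your whole discussion of extracting an asymptotic rotation in the normalizer of $\Gamma$ is unnecessary. The paper's mechanism is not an Ebin-type slice for metrics but the uniqueness clause in Lemma~\ref{dfree}: the gauge is imposed on \emph{complex structures}, and if $J_1,J_2$ both lie in the divergence-free Kuranishi family and $\Phi^*J_2=J_1$ with $\Phi$ small, then uniqueness forces $\Phi=\mathrm{id}$ and $J_1=J_2$. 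Together with the uniqueness of the scalar-flat K\"ahler potential coming from the surjectivity of the Lichnerowicz-type operator in Step~IV, this gives $g_1=g_2$ directly, and the whole ``upgrade $\Phi$ from a Riemannian isometry of $g$ to a holomorphic one'' step never arises.

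Your instinct that the delicate point is whether a Riemannian isometry must respect the chosen complex structure is exactly right, and it is where your route and the paper's route feel the same pressure from different sides: in your framework one must pass from $\Phi\in\mathrm{Isom}(g)$ to $\Phi\in\mathfrak{G}$, while in the paper's framework one must know that $\Phi^*g_2=g_1$ forces $\Phi^*J_2=J_1$ before invoking Lemma~\ref{dfree}. The paper does not resolve this in the survey and simply refers to \cite{HVI}; in the hyperk\"ahler case it explicitly concedes (Remark~\ref{hrem}) that the parametrization by $(J,\text{K\"ahler class})$ overcounts, which is consistent with Theorem~\ref{t2} because $\mathfrak{F}$ is defined as a set of \emph{metrics}, so the $2$-sphere of compatible complex structures collapses to a single point of $\mathfrak{F}$.
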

Again,  a more precise statement can be found in Section \ref{construction} below. As a consequence, the quotient space $\mathfrak{M} = \mathfrak{F}/\mathfrak{G}$ is the ``local moduli space
of scalar-flat K\"ahler ALE metrics near $g$.''
The local moduli space $\mathfrak{M}$ is not a manifold in general, but
its dimension is in fact well-defined, and we define 
$m = \dim ( \mathfrak{M})$.

\subsection{Deformations of the minimal resolution}

As mentioned above, there are families of examples of scalar-flat K\"ahler ALE metrics on minimal resolutions of isolated quotient singularities. We next recall
the definition of a minimal resolution.
\begin{definition}
\label{minresdef}
{\em Let $\Gamma \subset \U(2)$ be as above.  A smooth complex surface ${X}$ is called a {\textit{minimal resolution}} 
of $\CC^2/\Gamma$ if there is a holomorphic mapping $\pi : {X}\rightarrow \CC^2/\Gamma$ such 
that 
the restriction $\pi:{X} \setminus \pi^{-1}(0)\rightarrow \CC^2/\Gamma\setminus\{0\}$ is a biholomorphism, and
the set $\pi^{-1}(0)$ is a divisor in ${X}$ containing no $-1$ curves.
}
\end{definition}
The divisor $\pi^{-1}(0)$ is called the {\textit{exceptional divisor}} of the resolution.
In the cyclic case, the exceptional divisor is a string of rational curves with normal crossing singularities, and these are known are Hirzebruch-Jung strings. In the case that $\Gamma$ is non-cyclic, the exceptional divisor is a tree of 
rational curves with normal crossing singularities \cite{Brieskorn}. 
There are three Hirzebruch-Jung strings attached to a single curve,
called the {\textit{central rational curve}}. 
The self-intersection number of this curve will be denoted $-b_{\Gamma}$, and 
the total number of rational curves will be denoted by~$k_{\Gamma}$.

In the special case of a minimal resolution, our main result can be stated as follows. 
\begin{theorem}
\label{generalthm}
Let $(X,g,J)$ be any scalar-flat K\"ahler ALE metric on the minimal resolution of $\CC^2/\Gamma$, where $\Gamma \subset {\rm{U}}(2)$ is as above. Define 
\begin{align}
j_{\Gamma} =  2 \sum_{i = 1}^{k_{\Gamma}} (e_i -1),
\end{align}
where $-e_i$ is the self-intersection number of the $i$th rational curve, and $k_{\Gamma}$ is the number of rational curves in the exceptional divisor, and let 
\begin{align}
d_{\Gamma}=  j_{\Gamma} + k_{\Gamma}.
\end{align}
Then there is a family, $\mathfrak{F}$, parametrized by a ball in $\RR^{d_{\Gamma}}$, 
of scalar-flat K\"ahler metrics near $g$ which is ``versal''. 
The group  $\mathfrak{G}$ of holomorphic isometries of $(X,g,J)$
acts on $\mathfrak{F}$, and
the dimension $m_{\Gamma}$ of the local moduli space $\mathfrak{M} = \mathfrak{F}/\mathfrak{G}$  is given in Table \ref{dimtable}, where $e_{\Gamma}$ is the embedding dimension of $\CC^2/ \Gamma$. 
\begin{table}[h]
\caption{Dimension of local moduli space of scalar-flat K\"ahler metrics}
\label{dimtable}
\begin{tabular}{ll l l}
\hline
$\Gamma\subset{\rm U}(2)$ & $d_{\Gamma}$ & $m_{\Gamma}$\\
\hline\hline
\vspace{1mm}
$ \frac{1}{3}(1,1)$ & $5$ & $2$\\
\vspace{1mm}
$ \frac{1}{p}(1,1) , p \geq 4$ &  $ 2p -1   $ & $2 p - 5$\\
\vspace{1mm}
$ \frac{1}{p}(1,q), q \neq 1, p-1 $ & $j_{\Gamma} + k_{\Gamma}$ & $2 e_{\Gamma} + 3k - 8$\\
non-cyclic, not in ${\rm{SU}}(2)$ & $j_{\Gamma} + k_{\Gamma}$  & $2 e_{\Gamma} + 3k - 7 $ \\
\hline
\end{tabular}
\end{table}
\end{theorem}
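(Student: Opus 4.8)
The plan is to separate the two ingredients of $m_\Gamma=\dim\mathfrak{M}$: the dimension $d_\Gamma$ of the versal family $\mathfrak{F}$, and the dimension of a generic orbit of $\mathfrak{G}$ acting on $\mathfrak{F}$, so that $m_\Gamma=d_\Gamma-\dim(\text{generic }\mathfrak{G}\text{-orbit})$. The four rows of Table~\ref{dimtable} then come out by bookkeeping against the resolution graph, using the classical relation between the embedding dimension of the rational singularity $\CC^2/\Gamma$ and the self-intersection numbers $-e_i$ (Artin's formula $e_\Gamma=-Z^2+1$ applied to this resolution graph, which here reads $\sum_{i=1}^{k_\Gamma}(e_i-1)=e_\Gamma+k_\Gamma-3$).

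First I would invoke the construction recalled in Section~\ref{construction} (and \cite{HVI}): provided the complex deformation theory of $(X,J)$ is unobstructed, $\mathfrak{F}$ is cut out, modulo gauge, by the space of decaying infinitesimal deformations of complex structure together with a ball in the K\"ahler cone, so that $d_\Gamma = 2\dim_{\CC}H^1(X,\Theta_X)+\dim_{\RR}H^2(X;\RR)$. Since $X$ retracts onto the exceptional divisor, a tree of $k_\Gamma$ rational curves, $\dim_{\RR}H^2(X;\RR)=k_\Gamma$. For $H^1(X,\Theta_X)$ I would run a Mayer--Vietoris/formal-neighborhood argument along the exceptional tree: because $X$ is a minimal resolution there are no $(-1)$-curves, so each $C_i\cong\CP^1$ has normal bundle $\oo(-e_i)$ with $e_i\ge2$; the contribution of a tubular neighborhood of $C_i$ is $H^1(\CP^1,\oo(-e_i))\cong\CC^{e_i-1}$ (the tangential part and the higher normal jets contribute nothing once $e_i\ge2$), the pairwise overlaps in the cover are points, and the tree shape of the nerve prevents any further cancellation, giving $\dim_{\CC}H^1(X,\Theta_X)=\sum_{i=1}^{k_\Gamma}(e_i-1)$; the same local picture yields the vanishing of the obstruction space. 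Hence $d_\Gamma=2\sum(e_i-1)+k_\Gamma=j_\Gamma+k_\Gamma$, as asserted.

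Next I would identify $\mathfrak{G}^0$ and how it moves $\mathfrak{F}$. The scaling $\CC^\ast$ on $\CC^2$ commutes with any linear $\Gamma$, descends to $\CC^2/\Gamma$, and lifts to the minimal resolution, and its unit circle acts by holomorphic isometries on the asymptotically Euclidean metric, so $U(1)\subseteq\mathfrak{G}^0$ always. In the non-cyclic, non-$\mathrm{SU}(2)$ case $\CC^2/\Gamma$ is not toric (no $T^2$) and a larger connected symmetry group would force $\Gamma$ into the center of $\mathrm{SU}(2)$ (no $\mathrm{SU}(2)$), so $\mathfrak{G}^0=U(1)$; a generic class in $H^1(X,\Theta_X)$ has finite stabilizer, the generic orbit is $1$-dimensional, and with $2\sum(e_i-1)=2e_\Gamma+2k_\Gamma-6$ one gets $m_\Gamma=d_\Gamma-1=2e_\Gamma+3k_\Gamma-7$. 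In the cyclic case with $q\neq1,p-1$ the resolution is a toric surface carrying the toric Calderbank--Singer metric \cite{CalderbankSinger}, so $\mathfrak{G}^0=T^2$; again the generic stabilizer is finite, the generic orbit is $2$-dimensional, and $m_\Gamma=d_\Gamma-2=2e_\Gamma+3k_\Gamma-8$. In the remaining cases $\frac1p(1,1)$, where $X$ is the total space of $\oo(-p)\to\CP^1$ and $\mathfrak{G}^0=U(2)$, one has $H^1(X,\Theta_X)\cong H^1(\CP^1,\oo(-p))$, the irreducible $(p-1)$-dimensional $\mathrm{SU}(2)$-representation, with the central circle acting by a nonzero weight: for $p=3$ this is $\CC^2$ with the standard action and a generic $U(2)$-orbit is the full norm sphere $S^3$, of dimension $3$, so $m_\Gamma=5-3=2$; for $p\ge4$ the representation has complex dimension $\ge3$, the $\mathrm{SU}(2)$-orbit is a $3$-dimensional proper submanifold of the norm sphere so the central circle adds a dimension, the generic orbit has dimension $4$, and $m_\Gamma=(2p-1)-4=2p-5$. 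Collecting the four cases reproduces Table~\ref{dimtable}.

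The step I expect to be the main obstacle is the orbit-dimension analysis: one must pin down $\mathfrak{G}$ exactly for the explicit metrics (ruling out exotic extra isometries) and must prove that, except in the special case $\frac13(1,1)$, a generic deformation in $\mathfrak{F}$ breaks the continuous isometry group down to a finite subgroup, so that the generic orbit attains the full dimension $\dim\mathfrak{G}^0$; this is where the representation-theoretic input (the $U(2)$-action on $H^1(\CP^1,\oo(-p))$, the torus weights on $H^1(X,\Theta_X)$ in the cyclic case) really enters, and where the small cases $\frac13(1,1)$ and the $\frac1p(1,1)$ family with their large symmetry groups demand the most care. A secondary point is confirming the unobstructedness and the precise form $d_\Gamma=2\dim_{\CC}H^1(X,\Theta_X)+k_\Gamma$ of the versal dimension for every such minimal resolution.
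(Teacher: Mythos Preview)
Your outline matches the paper's strategy: compute $d_\Gamma$ from complex deformations plus K\"ahler classes, then subtract the generic $\mathfrak{G}$-orbit dimension case by case. The orbit analysis in Section~\ref{distable} of the paper is almost verbatim what you wrote (including the $\frac13(1,1)$ vs.\ $\frac1p(1,1)$ distinction via the $\mathrm{U}(2)$-representation on $H^1(\CP^1,\oo(-p))$), and the embedding-dimension bookkeeping uses the same identity $\sum(e_i-1)=e_\Gamma+k_\Gamma-3$, though the paper obtains it from Riemenschneider's relations (cyclic) and the explicit formulas in \cite{BKR, BKDihedral} (non-cyclic) rather than from Artin's formula; note that your reading of $e_\Gamma=-Z^2+1$ as $\sum(e_i-1)=e_\Gamma+k_\Gamma-3$ presupposes the fundamental cycle is reduced, which you would need to verify in the non-cyclic case.

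Where your proposal is thinner than the paper is precisely the part you flag as ``secondary.'' The paper does not compute $H^1(X,\Theta)$ by a Mayer--Vietoris over tubular neighborhoods; it uses the short exact sequence $0\to Der_E(X)\to\Theta_X\to\oo_E(E)\to 0$, reduces the problem to $H^1(X,Der_E(X))=0$, and proves that vanishing by an explicit \v{C}ech computation in the Hirzebruch--Jung coordinates (Lemma~\ref{cyclic}) for cyclic $\Gamma$, and by a relative-index argument for non-cyclic $\Gamma$ (Proposition~\ref{counting}): one glues Calderbank--Singer metrics onto a LeBrun orbifold, uses uniformly bounded right inverses to count $\ker P$, and then squeezes $\dim H^1(X,\Theta)$ between $\dim\H_{-3}$ and $\dim H^1(E,\oo_E(E))$. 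Equally important, the identification $d_\Gamma=2\dim_\CC H^1(X,\Theta)+k_\Gamma$ is not automatic: the versal family is parametrized by $\H_{ess}$, not by $H^1(X,\Theta)$, and Theorem~\ref{t2.3} is needed to show that for the minimal resolution the ``fake'' deformations in $\VV$ (coming from Euclidean motions at infinity) vanish and that the natural map $\H_{-3}\to H^1(X,\Theta)$ is an isomorphism. So your instinct that the orbit analysis is the main obstacle is inverted relative to the paper: that part is short, while making $d_\Gamma$ rigorous is where most of the work lies.
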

A description of the possible groups $\Gamma$ and other explicit formulas for $m_{\Gamma}$ 
can be found in Section \ref{exsec} below. 
\begin{remark}
\label{hrem}
{\em We did not include the ${\rm{SU}}(2)$ case in the above since 
the dimension of the moduli space of hyperk\"ahler metrics 
is known to be $3 k - 3$ in the $A_k, D_k$ and $E_k$ cases for $k \geq 2$, 
and equal to $1$ in the $A_1$ case \cite{KrCR}. Our method of
parametrizing by complex structures and K\"ahler classes overcounts 
in this case, since a hyperk\"ahler metric is K\"ahler with respect to a
$2$-sphere of complex structures, see Section \ref{exsec} for some further remarks. 
For other related results in the Ricci-flat case, see \cite{ConlonHeinII, Suvaina_ALE}. 
}
\end{remark}

It turns out that the moduli count in Theorem \ref{generalthm} is correct not just for the minimal resolution, but for any generic scalar-flat K\"ahler ALE surface which can be continuously deformed to the minimal resolution. 
\begin{theorem}[\cite{HVI}]
\label{t1.8}
Let $(X,g,J)$ be any scalar-flat K\"ahler ALE surface which deforms to 
the minimal resolution of $\CC^2/\Gamma$ through a path $(X,g_t,J_t)$ $(0\leq t\leq 1)$, 
where $g_1 = g$, $g_0$ is the minimal resolution, 
and $\|g_t-g_s\|_{C^{k,\alpha}_\delta(g_0)}\leq C\cdot |s-t|$ with $C>0$ a uniform constant for
any $0\leq s,t\leq 1$, $k\geq 4$, $-2<\delta<-1$.
If $\mathfrak{G}(g)  = \{e\}$ then the local moduli space 
$\mathfrak{F}$ is smooth near $g$ and is a manifold of dimension $m = m_{\Gamma}$.
\end{theorem}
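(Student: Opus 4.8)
The plan is to reduce the assertion to two facts about the sheaf $\Theta$ of holomorphic vector fields of the complex surface $(X,J)$, taken in the weighted ALE function spaces used in \cite{HVI}: that the obstruction space $H^2(X,\Theta)$ vanishes, and that $\dim H^1(X,\Theta)$ has the expected value. The second I would obtain from the minimal resolution by transporting an index along the given path. First, since $\mathfrak{G}(g)=\{e\}$ the group action in the construction of Section~\ref{construction} is trivial, so $\mathfrak{M}$ and $\mathfrak{F}$ coincide near $g$ and it suffices to show that $\mathfrak{F}$ is a smooth manifold of dimension $m_\Gamma$ there. By the first theorem stated in the Introduction, every small deformation of the complex structure and every small deformation of the K\"ahler class of $(X,g,J)$ carries a scalar-flat K\"ahler ALE metric, unique in its class; hence near $g$ the family $\mathfrak{F}$ fibers over the Kuranishi space of ALE complex deformations of $(X,J)$, with fiber the open set of admissible K\"ahler classes, a space of dimension $b_\Gamma:=\dim H^2_{\mathrm{dR}}(X)$. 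Since $X$ is the underlying smooth manifold of the minimal resolution $X_0$, this $b_\Gamma$ is the integer attached to $X_0$, and the task becomes to show that the Kuranishi space of $(X,J)$ is smooth of dimension $m_\Gamma - b_\Gamma$, which follows once $H^2(X,\Theta)=0$ and $\dim H^1(X,\Theta)=m_\Gamma - b_\Gamma$.

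Next I would carry out a deformation-invariance argument. The path $t\mapsto(X,J_t)$ is a continuous family of ALE complex surfaces, all asymptotic to $\CC^2/\Gamma$, and the Lipschitz bound on $g_t$ in the weighted H\"older norm is exactly what makes the weighted elliptic complex computing $H^\bullet(X,\Theta)$ vary continuously in $t$; because the asymptotic cone $\RR^4/\Gamma$ is fixed, the indicial roots do not move and the chosen weight never crosses one, so the (relative) holomorphic Euler characteristic $\chi:=\dim H^0(X,\Theta)-\dim H^1(X,\Theta)+\dim H^2(X,\Theta)$ is independent of $t$. I would then evaluate $\chi$ at $t=0$: on the minimal resolution the deformations are unobstructed, so $H^2(X_0,\Theta)=0$, while $\dim H^1(X_0,\Theta)=d_\Gamma - b_\Gamma$ and $\dim_\CC H^0(X_0,\Theta)=\dim_\RR\mathfrak{G}(g_0)=d_\Gamma - m_\Gamma$ (the first two from the dimension count underlying Theorem~\ref{generalthm}, the identification of $H^0$ with the isometry Lie algebra from the ALE analogue of Calabi's theorem, and the equality $d_\Gamma - \dim\mathfrak{G}(g_0)=m_\Gamma$ from the generic $\mathfrak{G}$-orbit in $\mathfrak{F}$ having full dimension). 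Hence $\chi=(d_\Gamma - m_\Gamma)-(d_\Gamma - b_\Gamma)=b_\Gamma - m_\Gamma$ for all $t$, in particular at $t=1$.

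It remains to verify, at $g$, that $H^0(X,\Theta)=0$ and $H^2(X,\Theta)=0$. For the first: a scalar-flat K\"ahler metric is extremal in the sense of Calabi, so by the ALE version of Calabi's theorem the identity component of the reduced automorphism group of $(X,J)$ is the complexification of $\mathfrak{G}(g)=\{e\}$; as $X$ is simply connected there is no Albanese part, and so no nonzero holomorphic vector field lies in the relevant weighted space, i.e.\ $H^0(X,\Theta)=0$. For the second — the essential point — one uses that the path confines $(X,J)$ to the deformation class of the minimal resolution of the \emph{rational} quotient singularity $\CC^2/\Gamma$ (ruling out, e.g., blow-ups of other ALE surfaces asymptotic to the same cone, which would moreover have the wrong $b_2$), and for complex surfaces in that class the obstruction space of the ALE deformation problem vanishes; this is the content of the relevant result in \cite{HVI}. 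Granting these two vanishings, the index identity yields $\dim H^1(X,\Theta)=\dim H^0(X,\Theta)-\chi=m_\Gamma - b_\Gamma$, so the Kuranishi space of $(X,J)$ is a smooth ball of dimension $m_\Gamma - b_\Gamma$, $\mathfrak{F}$ is a smooth manifold of dimension $m_\Gamma$, and — $\mathfrak{G}(g)$ being trivial — this is the local moduli space, so $m=m_\Gamma$.

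The main obstacle I expect is the vanishing $H^2(X,\Theta)=0$, i.e.\ unobstructedness of the ALE deformation problem at $g$. This cannot be deduced from upper semicontinuity off the minimal resolution — that inequality runs the wrong way — so it must be extracted from the path hypothesis, which is precisely what pins $(X,J)$ into the deformation class of a resolution of a rational quotient singularity, together with a genuine vanishing theorem for the obstruction space there. The two remaining technical points seem comparatively routine: the ALE analogue of Calabi's theorem used to force $H^0(X,\Theta)=0$, and making the deformation-invariance of $\chi$ precise on the non-compact manifold — checking that the weighted complex varies continuously along the Lipschitz path and that no indicial root of the fixed cone $\RR^4/\Gamma$ is crossed.
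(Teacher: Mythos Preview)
Your overall strategy---reduce everything to constancy of a Fredholm index along the Lipschitz path---is exactly the paper's approach; its proof is the one-liner ``Theorem~\ref{generalthm} plus the fact that the index of a strongly continuous family of Fredholm operators is constant.'' But your packaging of that index has a real gap.

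The complex-structure parameter in Theorem~\ref{tmain} is $\H_{ess}$, the orthogonal complement of $\VV$ inside $\H_{-3}(X,\Lambda^{0,1}\otimes\Theta)$, not $H^1(X,\Theta)$. These spaces coincide at the minimal resolution (Theorem~\ref{t2.3}), but at a generic $g$ with $\mathfrak{G}(g)=\{e\}$ they do not: the asymptotic Killing fields in $\WW$ are no longer genuine holomorphic Killing fields, so the map $\WW\to\H_{-3}$, $Z\mapsto J\circ\mathfrak{L}_ZJ$, becomes injective and $\VV\neq 0$. Your Euler characteristic $\chi=h^0-h^1+h^2$ tries to absorb this by setting $h^0=\dim_{\RR}\mathfrak{G}$, but the holomorphic Killing fields on the minimal resolution grow linearly (weight $\geq 1$), while the deformations you want in $h^1$ live at weight $\delta\in(-2,-1)$. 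These are not the cohomology groups of a single weighted elliptic complex, so their alternating sum is not a Fredholm index; with the decaying weight one has $h^0=0$ at every $t$, and your identification $h^0(X_0)=\dim\mathfrak{G}(g_0)$ fails.

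The clean argument keeps the two pieces separate. The operator $P=(\bar\partial^*,\bar\partial)$ on $C^{k,\alpha}_\delta(\Lambda^{0,1}\otimes\Theta)$ is Fredholm and \emph{surjective} for every scalar-flat K\"ahler ALE surface (Step~I of Section~\ref{construction}, via the Hein--LeBrun--Maskit compactification; the path hypothesis is not needed for unobstructedness, contrary to what you suggest), so $\dim\H_{-3}=\operatorname{ind}(P_{g_t})$ is constant in $t$. Separately, $\dim\WW$ is governed by the fixed asymptotic cone $\RR^4/\Gamma$ and is likewise constant. Since the kernel of $\WW\to\VV$ is exactly the holomorphic Killing fields, $\dim\VV=\dim\WW-\dim\mathfrak{G}$, and one obtains
\[
\dim\H_{ess}+b_2-\dim\mathfrak{G}\;=\;\dim\H_{-3}-\dim\WW+b_2,
\]
with the right-hand side constant along the path. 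At $t=0$ this is $m_\Gamma$; at $t=1$, with $\mathfrak{G}(g)=\{e\}$, it equals $\dim\mathfrak{F}=m$.
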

The proof of this theorem is more or less a direct application of Theorem~\ref{generalthm} together with the basic fact that the index of a strongly continuous family of Fredholm operators is constant.
\begin{remark}{\em
It was recently shown that K\"ahler ALE surface with group $\Gamma\subset {\rm{U}}(2)$ is birational to a deformation of $\CC^2/\Gamma$ \cite{HRS16}. There are several possible components of the deformation of such a cone, so the above result gives the dimension of the moduli space for the ``Artin component'' of deformations of $\CC^2/\Gamma$, which is the component with maximal dimension. 
}
\end{remark}

\subsection{Acknowledgements}  
This article is dedicated to Gang Tian on the occasion of the 60th birthday. The second author is extremely grateful to Tian for his collaboration, friendship, and generosity throughout the years since we first met over 20 years ago. 

\section{Construction of the local moduli space}
\label{construction}
In this section, we will give a survey of the main results in \cite{HVI}. 
We  first recall some basic facts regarding deformations of complex structures. 
For a complex manifold $(X,J)$, let $\Lambda^{p,q}$ denote the bundle of $(p,q)$-forms, and let $\Theta$ denote the holomorphic tangent bundle. The deformation complex 
corresponds to a real complex as shown in the commutative diagram 
\begin{equation}
\begin{tikzcd}[column sep=2.1cm]
\label{cd1}
\Gamma(\Theta) \arrow[r, "\overline{\partial}"] \arrow[d, "Re"] & \Gamma( \Lambda^{0,1} \otimes \Theta) \arrow[r, "{\overline{\partial}}"] \arrow[d, "Re"] & \Gamma(\Lambda^{0,2} \otimes \Theta) \arrow[d, "Re"]\\
\Gamma(TX) \arrow[r,"Z \mapsto -\frac{1}{2} J \circ \mathcal{L}_{Z}J"]  & \Gamma(End_{a}(TX)) 
\arrow[r, "I \mapsto \frac{1}{4} J \circ N_J'(I)"] &  \Gamma\big( \{\Lambda^{0,2} \otimes \Theta \oplus \Lambda^{2,0} \otimes \overline{\Theta} \}_{\RR} \big),
\end{tikzcd}
\end{equation}
where $\mathfrak{L}_Z J$ is the Lie derivative of $J$,
\begin{align}
End_{a}(TX) = \{I\in End(TX): IJ = -JI\},
\end{align}
and $N_J'$ is the linearization of Nijenhuis tensor 
\begin{align}
N(X,Y) = 2\{[JX,JY]-[X,Y]-J[X,JY]-J[JX,Y]\}
\end{align}
at $J$. 
Each isomorphism $Re$ is simply taking the real part of a section. 
If $g$ is a Hermitian metric compatible with $J$, then let $\square$ denote the $\bar\partial$-Laplacian 
\begin{align}
\square  \equiv \bar\partial^*\bar\partial+\bar\partial\bar\partial^*,
\end{align}
where $\bar\partial^*$ denotes the formal $L^2$-adjoint. 
Each complex bundle in the diagram \eqref{cd1} admits a $\square$-Laplacian,
and these correspond to real Laplacians on each real bundle 
in \eqref{cd1}. We will use the same $\square$-notation for 
these real Laplacians. 

We next define the spaces of harmonic sections which will appear in the statement of the main result. 
\begin{definition}
{\em
Let $(X,g,J)$ be a K\"ahler ALE surface. For any bundle $E$ in 
the diagram \eqref{cd1}, and $\tau \in \RR$, define
\begin{align}
\H_{\tau}(X,E) &= \{\theta\in \Gamma(X,E): \square\theta=0, \theta=O(r^{\tau}) 
 \mbox{ as } r \rightarrow \infty \}.
\end{align}
Define 
\begin{align}
\WW = \{ Z \in \mathcal{H}_1(X, TX) \ | \ \mathfrak{L}_Z g  = O (r^{-1}), \ \mathfrak{L}_Z J = O(r^{-3}),\mbox{ as } r \rightarrow \infty \}.
\end{align}
Finally, define the real subspace
\begin{align}
\label{hess}
\H_{ess} (X,End_a(TX)) \subset  \H_{-3}(X,End_a(X)) 
\end{align}
to be the $L^2$-orthogonal complement in $\H_{-3}(X,End_a(X))$
of the subspace 
\begin{align}
\VV = \{ \theta \in \H_{-3}(X,End_a(TX)) \  | \ 
\theta =  J\circ \mathfrak{L}_Z J, \  Z \in \WW \}.
\end{align}
}
\end{definition}
The subscript {\it{ess}} in \eqref{hess} is short for essential, and 
is necessary because there is a gauge freedom of Euclidean 
motions in the definition of ALE coordinates, so that 
element of $\VV$ are not really essential deformations, i.e., they can be gauged away. 

To state the main result precisely, we need to define weighted H\"older spaces.
\begin{definition}{\em
Let $E$ be a tensor bundle on $X$, with Hermitian metric $\Vert \cdot\Vert_h$. Let $\varphi$ be a smooth section of $E$. We fix a point $p_0\in X$, and define $r(p)$ to be the distance between $p_0$ and $p$. Then define 
\begin{align}
\Vert \varphi\Vert_{C^{0}_\delta} &:= \sup_{p\in X}\Big\{\Vert\varphi(p)\Vert_h\cdot (1+r(p))^{-\delta}\Big\}\\
\Vert \varphi\Vert_{C^{k}_\delta} &:= \sum_{|\mathcal{I}|\leq k}\sup_{p\in X}
\Big\{\Vert\nabla^{\mathcal{I}} \varphi(p)\Vert_h\cdot (1+r(p))^{-\delta+|\mathcal{I}|}\Big\},
\end{align}
where $\mathcal{I} = (i_1,\ldots,i_n),|\mathcal{I}|=\sum_{j=1}^n i_j$.
Next, define
\begin{align}
[\varphi]_{C^{\alpha}_{\delta-\alpha}} &:= \sup_{0<d(x,y)<\rho_{inj}}\Big\{\min\{r(x),r(y)\}^{-\delta+\alpha}\frac{\Vert\varphi(x)-\varphi(y)\Vert_h}{d(x,y)^\alpha}\Big\},
\end{align}
where $0<\alpha<1$, $\rho_{inj}$ is the injectivity radius, and $d(x,y)$ is the distance between $x$ and $y$. The meaning of the tensor norm is to use parallel transport along the unique minimal geodesic from  $y$ to $x$, and then take the norm of the difference 
at $x$. 
The weighted H\"older norm is defined by 
\begin{align}
\Vert\varphi\Vert_{C^{k,\alpha}_\delta} &:= \Vert \varphi\Vert_{C^{k}_\delta}+\sum_{|\mathcal{I}|=k}[\nabla^{\mathcal{I}} \varphi]_{C^{\alpha}_{\delta-k-\alpha}},
\end{align}
and the space $C^{k,\alpha}_{\delta}(X, E)$ is the closure of $\{\varphi \in C^{\infty}(X,E): \Vert \varphi\Vert_{C^{k,\alpha}_\delta}<\infty\}$.
}
\end{definition}

The main result of \cite{HVI} is the following.
\begin{theorem}[\cite{HVI}]
\label{tmain}
Let $(X,g,J)$ be a scalar-flat K\"ahler ALE surface.
Let $-2 < \delta < -1$, $0 < \alpha < 1$,  and $k$ an integer with $k \geq 4$
 be fixed constants. 
Let $B^1_{\epsilon_1}$ denote an $\epsilon_1$-ball in $\H_{ess} ( X,End_a(TX)) $, 
$B^2_{\epsilon_2}$ denote an $\epsilon_2$-ball in $\H_{-3}(X,\Lambda^{1,1})$ (both using the $L^2$-norm). 
Then there exists $\epsilon_1 >0$ and $\epsilon_2 >0$ and a family $\mathfrak{F}$ of scalar-flat K\"ahler metrics near $g$, parametrized by 
$B^1_{\epsilon_1} \times B^2_{\epsilon_2}$, that is, there is a differentiable mapping 
\begin{align}
F : B^1_{\epsilon_1} \times B^2_{\epsilon_2} \rightarrow {Met}(X), 
\end{align}
into the space of smooth Riemannian metrics on $X$, 
with $\mathfrak{F} = F(B^1_{\epsilon_1} \times B^2_{\epsilon_2})$ satisfying the following ``versal'' property:
there exists a constant $\epsilon_3 >0$ 
such that for any scalar-flat K\"ahler metric $\tilde{g}\in B_{\epsilon_3}(g)$,
there exists a diffeomorphism $\Phi: X \rightarrow X$, $\Phi \in C^{k+1,\alpha}_{loc}$,
such that $\Phi^* \tilde{g} \in \mathfrak{F}$, 
where 
\begin{align}
\label{wnd}
B_{\epsilon_3}(g) =  \{g' \in C^{k,\alpha}_{loc}(S^2(T^*X))  \ | \ \Vert g - g' \Vert_{C^{k,\alpha}_{\delta}(S^2(T^*X))} < \epsilon_3 \}.
\end{align} 
\end{theorem}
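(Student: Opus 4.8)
The plan is to realize $\mathfrak{F}$ as the image of a Kuranishi-type map built from the deformation complex \eqref{cd1}, combined with a solution of the scalar-flat K\"ahler equation, and then to establish versality by a slice argument for the action of diffeomorphisms. Throughout one works in the weighted H\"older spaces $C^{k,\alpha}_\delta$ defined above, using that for a non-exceptional weight the $\square$-Laplacians attached to the bundles in \eqref{cd1} are Fredholm maps $C^{k,\alpha}_\delta\to C^{k-2,\alpha}_{\delta-2}$ admitting $L^2$-type Hodge decompositions with finite-dimensional harmonic summands $\H_\tau$. The first step is therefore analytic: establish these decompositions and identify the relevant kernels and cokernels. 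In particular one checks that the infinitesimal essential complex deformations form $\H_{ess}(X,End_a(TX))$ — this is the point of introducing $\VV$, which captures the residual gauge freedom of Euclidean motions at infinity and is therefore quotiented out — and that the infinitesimal K\"ahler-class deformations are parametrized by $\H_{-3}(X,\Lambda^{1,1})$, the $L^2$ harmonic $(1,1)$-forms, whose dimension equals $b_2(X)$. One also needs the indicial-root bookkeeping showing that $-2<\delta<-1$ is non-exceptional and that complex-structure deformations may be normalized to decay like $r^{-3}$.

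Next I would construct $F$. Given $(I,\omega')\in B^1_{\epsilon_1}\times B^2_{\epsilon_2}$, first produce a nearby integrable complex structure $J_I$ by solving the Maurer--Cartan equation $\bar\partial\phi+\tfrac12[\phi,\phi]=0$ with $\phi=I+O(\Vert I\Vert^2)$ in the $\bar\partial^*$-gauge; the obstruction lies in $\H_{-3}(X,\Lambda^{0,2}\otimes\Theta)$, which one shows vanishes for a scalar-flat K\"ahler ALE surface. Then, on $(X,J_I)$ with K\"ahler class moved by $[\omega']$, solve the scalar-flat K\"ahler equation for the metric: the linearization is (a conjugate of) the Lichnerowicz operator, whose cokernel in the appropriate weighted space must again be shown to vanish, after which the implicit function theorem yields a unique scalar-flat K\"ahler metric $g_{(I,\omega')}$ near $g$. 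Setting $F(I,\omega')=g_{(I,\omega')}$, smoothness of $F$ follows from smooth dependence of the implicit-function solutions on the parameters, and $\mathfrak{F}=F(B^1_{\epsilon_1}\times B^2_{\epsilon_2})$.

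For versality, let $\tilde g\in B_{\epsilon_3}(g)$ be scalar-flat K\"ahler with complex structure $\tilde J$ and K\"ahler form $\tilde\omega$. Since $\tilde g$ is $C^{k,\alpha}_\delta$-close to $g$, by \cite{HL15} one may choose ALE coordinates with $\tilde J - J_{Euc}=O(r^{-\mu})$, so $\tilde J$ is close to $J$ in the relevant norm. A Kuranishi slice theorem for complex structures on $X$ — again in weighted spaces, with residual gauge precisely the Euclidean motions $\VV$ — provides a diffeomorphism $\Phi_1\in C^{k+1,\alpha}_{loc}$, close to the identity, with $\Phi_1^*\tilde J=J_I$ for some $I\in B^1_{\epsilon_1}$. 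Then $\Phi_1^*\tilde g$ is scalar-flat K\"ahler on $(X,J_I)$; after correcting its K\"ahler class into $B^2_{\epsilon_2}$ (using the $\partial\bar\partial$-lemma on ALE surfaces to identify $H^{1,1}$ with $\H_{-3}(X,\Lambda^{1,1})$ modulo higher order), uniqueness of the scalar-flat K\"ahler metric in a fixed K\"ahler class on an ALE surface forces $\Phi^*\tilde g=F(I,\omega')$ for the corresponding parameters, where $\Phi$ is $\Phi_1$ composed with the diffeomorphism realizing the class-correction; hence $\Phi^*\tilde g\in\mathfrak{F}$.

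The main obstacle is the vanishing of the two obstruction spaces, $\H_{-3}(X,\Lambda^{0,2}\otimes\Theta)$ (integrability) and the cokernel of the linearized scalar-curvature operator. This is where the hypotheses — K\"ahler, scalar-flat, complex dimension $2$ — must be used decisively, via Weitzenb\"ock/Bochner identities for the self-dual and anti-self-dual parts, the vanishing of the scalar curvature, and integration by parts in the weighted setting, taking care that no boundary terms survive at infinity given the $O(r^{-3})$ decay. A secondary difficulty is making the slice theorem rigorous in the non-compact ALE category: the gauge transformations are only $C^{k+1,\alpha}_{loc}$ rather than decaying, precisely because the Euclidean-motion directions $\VV$ are nontrivial, and one must carry the several different natural decay rates — complex-structure deformations at $r^{-3}$, potential corrections, and the non-decaying gauge — consistently through all the Fredholm estimates.
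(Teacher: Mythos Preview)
Your proposal follows the same architecture as the paper's proof outline: a weighted Kuranishi construction for integrable complex structures in the $\bar\partial^*$-gauge, a slice argument handling the Euclidean-motion gauge freedom via the subspace $\VV$, and then the scalar-flat equation solved by the implicit function theorem with linearization $-(\bar\partial\bar\partial^{\#})^*(\bar\partial\bar\partial^{\#})$. Two places deserve comment.

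First, for the vanishing of the obstruction space $\H_{-3}(X,\Lambda^{0,2}\otimes\Theta)$ you propose a direct Bochner/Weitzenb\"ock argument with integration by parts at infinity. The paper takes a different route: it invokes the complex-analytic compactification of a K\"ahler ALE surface due to Hein--LeBrun--Maskit and deduces the vanishing from the compactified picture. Your Bochner strategy is not obviously available, because scalar-flat is strictly weaker than Ricci-flat in this setting and the curvature term in the relevant Weitzenb\"ock formula involves the full Ricci curvature, not merely its trace; there is no evident sign. So this is the one step where your sketch may not go through as written, and the compactification argument is what actually carries the load.

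Second, in constructing $F$ you pass directly from the integrable structure $J_I$ to ``on $(X,J_I)$ with K\"ahler class moved by $[\omega']$, solve the scalar-flat K\"ahler equation''. This tacitly assumes that $(X,J_I)$ already admits an ALE K\"ahler metric near $g$ in which to vary the potential. The paper separates this out as its own step --- an ALE analogue of the Kodaira--Spencer stability theorem for the K\"ahler property, proved along the lines of Biquard--Rollin --- and you should make it explicit rather than fold it into the scalar-flat step. Relatedly, your appeal at the end to ``uniqueness of the scalar-flat K\"ahler metric in a fixed K\"ahler class on an ALE surface'' is exactly the content of the surjectivity/injectivity of the linearized operator in the chosen weighted space, so it is not an independent input but part of Step~IV itself.
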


\subsection{Outline of Proof of Theorem \ref{tmain}}
The main steps in the proof of Theorem~\ref{tmain} are the following. 

\vspace{2mm}
\noindent
Step I:  One first analyzes deformations of complex structures using an adaptation of Kuranishi's theory  \cite{Kuranishi},
to ALE spaces. To first order, the almost complex structures near a given ALE K\"ahler metric are in correspondence with sections in $\Gamma(\Lambda^{0,1} \otimes \Theta)$. 
The integrable complex structures solve a nonlinear elliptic equation, modulo diffeomorphisms. By imposing a divergence-free gauging condition, we obtain a finite-dimensional Kuranishi family which is parametrized by decaying harmonic sections in  $\mathcal{H}_{-3}(X, \Lambda^{0,1} \otimes \Theta)$. Unobstructedness follows from a vanishing theorem, which relies on
some analysis of the complex analytic compactifications of K\"ahler ALE spaces, due to Hein-LeBrun-Maskit \cite{HL15, LeBrunMaskit}.
An important point is that since the manifold is non-compact, the sheaf cohomology group $H^1(X, \Theta)$, which vanishes in the Stein case,  should be replaced by an appropriate space of decaying harmonic forms.

\vspace{2mm}
\noindent
Step II: Several key results about
gauging and diffeomorphisms are needed to prove ``versality'' of the 
family constructed. Our main infinitesimal slicing result is the following. 
\begin{lemma}
\label{dfree}
Let $(X,J_0,g_0)$ be a K\"ahler ALE surface with $J_0,g_0 \in C^{\infty}$. There exists an $\epsilon_1'>0$ such that for any complex structure $\Vert J_1-J_0\Vert_{C^{k,\alpha}_{\delta}}<\epsilon_1'$, where $k\geq 3, \alpha\in (0,1),\delta\in (-2,-1)$, there exists a unique diffeomorphism $\Phi$,
of the form $\Phi_Y$ (see \eqref{phidef} below) for $Y \in C^{k+1,\alpha}_{\delta+1}(TX)$
such that $\Phi_Y^*(J_1)$ is in the divergence-free gauged Kuranishi family. 
\end{lemma}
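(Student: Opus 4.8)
The plan is to obtain the gauging diffeomorphism from the inverse function theorem in weighted H\"older spaces, the relevant linearized operator being the $\overline{\partial}$-Laplacian on the holomorphic tangent bundle $\Theta$ of $(X,J_0)$. Recall from the Kuranishi construction of Step~I that the divergence-free gauged Kuranishi family consists precisely of the integrable almost-complex structures $J$ near $J_0$ whose deformation tensor $\phi(J)\in\Gamma(\Lambda^{0,1}\otimes\Theta)$ (equivalently, the associated section of $End_a(TX)$ appearing in \eqref{cd1}) satisfies the slice condition $\overline{\partial}^*\phi(J)=0$. A pullback $\Phi_Y^*(J_1)$ of a complex structure is again integrable and, once $\Vert Y\Vert$ and $\Vert J_1-J_0\Vert$ are small, is $C^{k,\alpha}$-close to $J_0$; it therefore lies automatically in the Kuranishi family as soon as it satisfies the slice condition. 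Hence it suffices to solve
\begin{align}
\mathcal{N}(Y,J_1):=\overline{\partial}^*\phi\big(\Phi_Y^*(J_1)\big)=0
\end{align}
for $Y\in C^{k+1,\alpha}_{\delta+1}(TX)$. Because $Y=O(r^{\delta+1})$ with $\delta+1<0$, for $\Vert Y\Vert$ small $\Phi_Y$ is a diffeomorphism preserving the ALE structure, so $\mathcal{N}$ is a smooth map from a neighbourhood of $(0,J_0)$ in $C^{k+1,\alpha}_{\delta+1}(TX)\times\{J:\Vert J-J_0\Vert_{C^{k,\alpha}_\delta}<\epsilon\}$ into $C^{k-1,\alpha}_{\delta-1}(\Theta)$, and $\mathcal{N}(0,J_0)=0$ since $\phi(J_0)=0$.

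The Kodaira--Spencer computation identifies, up to the isomorphisms of \eqref{cd1}, the linearization
\begin{align}
L:=D_Y\mathcal{N}(0,J_0)=\overline{\partial}^*\overline{\partial}=\square,
\end{align}
the $\overline{\partial}$-Laplacian on $\Theta$, an elliptic operator of Laplace type whose indicial roots at infinity are those of the scalar Laplacian on $\RR^4$, namely the integers that are $\le-2$ or $\ge0$. Since $\delta+1\in(-1,0)$ is not an indicial root, $L:C^{k+1,\alpha}_{\delta+1}(\Theta)\to C^{k-1,\alpha}_{\delta-1}(\Theta)$ is Fredholm, and I claim it is an isomorphism. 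Its kernel, and --- via the duality in the ALE Fredholm pairing --- its cokernel, consist of $\square$-harmonic sections of $\Theta$ with decay $O(r^s)$ for some $s\in(-2,0)$; any such section is in fact $O(r^{-2})$, there being no indicial root in $(-2,0)$, and this decay legitimizes the integration by parts $0=\langle\square\theta,\theta\rangle_{L^2}=\Vert\overline{\partial}\theta\Vert_{L^2}^2$ (the boundary term over $\{r=R\}$ is $O(R^{-2})$), so $\theta$ is a holomorphic vector field on $X$ decaying at infinity. Such a field vanishes identically on a K\"ahler ALE surface, as follows from the Hein--LeBrun--Maskit compactification analysis \cite{HL15,LeBrunMaskit} already invoked in Step~I. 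Hence $L$ is a Banach-space isomorphism.

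Applying the implicit function theorem to $\mathcal{N}$ now produces $\epsilon_1'>0$ and a unique map $J_1\mapsto Y(J_1)\in C^{k+1,\alpha}_{\delta+1}(TX)$ with $Y(J_0)=0$ solving $\mathcal{N}(Y(J_1),J_1)=0$ whenever $\Vert J_1-J_0\Vert_{C^{k,\alpha}_\delta}<\epsilon_1'$; elliptic regularity for $\square$ bootstraps $Y$ to be as smooth as $J_1$ allows. Setting $\Phi:=\Phi_{Y(J_1)}$ gives the desired diffeomorphism, $\Phi\in C^{k+1,\alpha}_{loc}$, and its uniqueness within the stated class is precisely the local uniqueness built into the implicit function theorem.

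The main obstacle is the invertibility of $L=\square$ on $\Theta$ between these weighted spaces: one must verify that $\delta+1$ is not an indicial root (so Fredholmness holds), and --- the essential point --- rule out nonzero decaying $\square$-harmonic, hence holomorphic, vector fields, which is exactly where the complex-analytic compactification of the K\"ahler ALE surface is used. The remaining ingredients --- smoothness of $\mathcal{N}$ between the Banach spaces, the fact that $\Phi_Y$ is an honest ALE diffeomorphism for $\Vert Y\Vert$ small, and that a gauge-fixed integrable deformation automatically lands in the Kuranishi family --- are routine given $\delta+1<0$.
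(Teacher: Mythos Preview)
This survey paper does not include a proof of Lemma~\ref{dfree}; it is stated without argument in Step~II of the outline and attributed to \cite{HVI}. Your approach---reducing the slice condition $\overline{\partial}^*\phi(\Phi_Y^*J_1)=0$ to the implicit function theorem with linearization $\square=\overline{\partial}^*\overline{\partial}$ on $\Theta$, and establishing invertibility of $\square:C^{k+1,\alpha}_{\delta+1}\to C^{k-1,\alpha}_{\delta-1}$ via the absence of indicial roots in $(-2,0)$ together with the vanishing of decaying holomorphic vector fields---is correct and is essentially the argument carried out in the source paper \cite{HVI}; your identification of the key analytic input (the Hein--LeBrun--Maskit compactification to kill decaying holomorphic vector fields) matches the emphasis the present paper places on that result in Step~I.
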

Essentially, this shows that the divergence-free gauge gives a local slice transverse to the ``small'' diffeomorphism group action.
However, a more refined gauging procedure is needed in order to construct the Kuranishi family of ``essential'' deformations. As stated above, this refined gauging is necessary because of the freedom of Euclidean motions in the definition of an ALE metric, which means that there are decaying elements in the kernel of the linearized operator which can be written as Lie derivatives of linearly growing vector fields. These directions are not true moduli directions, and we show that they can be ignored modulo diffeomorphisms. Thus we can restrict attention to the subspace of essential deformations defined in \eqref{hess} above. 

\vspace{2mm}
\noindent
Step III: Next, one needs to generalize Kodaira-Spencer's stability theorem for K\"ahler structures \cite{KSIII} to the ALE setting, to prove that the above deformations retain the ALE K\"ahler property. This was proved using some arguments similar to that of Biquard-Rollin \cite{BiquardRollin}.

\vspace{2mm}
\noindent
Step IV: To study the deformations of the scalar-flat K\"ahler structure, we then adapted the LeBrun-Singer-Simanca theory of deformations of extremal K\"ahler metrics to the ALE setting \cite{LS93, LeBrun_Simanca}. 
Denote $S(\omega_0+\sqrt{-1}\partial\bar\partial f)$ as the scalar curvature of $X$ with metric $\omega_0+\sqrt{-1}\partial\bar\partial f$. We consider $S$ as mapping between weighted H\"older spaces,
\begin{align}
\begin{split}
S : C^{k,\alpha}_{\epsilon}(X)&\rightarrow C^{k-4,\alpha}_{\epsilon-4}(X)\\
 f &\mapsto S(\omega_0+\sqrt{-1}\partial\bar\partial f).
\end{split}
\end{align}
If $\omega_0$ is scalar-flat, the linearized operator is $L(f) = - (\bar\partial\bar\partial^{\#})^*(\bar\partial\bar\partial^{\#})(f)$, where the operator $\bar\partial^{\#}f = g_0^{i,\bar{j}}\bar\partial_j f$. We showed that the linearized map is surjective for $0 < \epsilon < 1$,
and then an application of the implicit function theorem completes the proof. 
\subsection{Universality}
As mentioned above, the family $\mathfrak{F}$ is not necessarily ``universal'',
because some elements in $\mathfrak{F}$ might be isometric.
To construct a universal moduli space, we need to describe a neighborhood of the identity in the space of diffeomorphisms. If $(X,g)$ is an ALE metric, and $Y$ is a vector field on $X$, 
the Riemannian exponential mapping $\exp_p : T_pX 
\rightarrow X$ induces a mapping 
\begin{align}
\Phi_Y : X \rightarrow X
\end{align}
by 
\begin{align}
\label{phidef}
\Phi_Y (p) = \exp_p (Y).
\end{align}
If $Y \in C^{k,\alpha}_{s}(TX)$ has sufficiently small norm, 
($s < 0$ and $k$ will be determined in specific cases)
then $\Phi_Y$ is a diffeomorphism. We will use the correspondence 
$Y \mapsto \Phi_Y$ to parametrize a neighborhood of the identity, 
analogous to  \cite{Biquard06}.
\begin{definition}
\label{smalldef}
{\em 
We say that $\Phi: X \rightarrow X$  
is a {\it{small diffeomorphism}} if $\Phi$ is of the form 
$\Phi = \Phi_Y$ for some vector field $Y$ satisfying
\begin{align}
\Vert Y \Vert_{C^{k+1,\alpha}_{\delta+1}} < \epsilon_4
\end{align}
for some $\epsilon_4 >0$ sufficiently small which depends on $\epsilon_3$.
}
\end{definition}
The following result shows that after taking a quotient 
by an action of the holomorphic isometries of the central fiber $(X,g,J)$, the family $\mathfrak{F}$ is in fact universal (up to small diffeomorphisms).
\begin{theorem}[\cite{HVI}]
\label{t2}
Let $(X,g,J)$ be as in Theorem \ref{tmain},
and let $\mathfrak{G}$ denote the group of holomorphic isometries of $(X,g,J)$. 
Then there is an action of $\mathfrak{G}$ on $\mathfrak{F}$ with the following properties.
\begin{itemize}

\item Two metrics in $\mathfrak{F}$ are isometric 
if they are in the same orbit of $\mathfrak{G}$. 

\item If two metrics in $\mathfrak{F}$ are isometric by a small
diffeomorphism then they must be the same.  
\end{itemize}
\end{theorem}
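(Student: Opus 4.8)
The plan is to deduce both assertions from the equivariance of the construction $F$ of Theorem~\ref{tmain} under $\mathfrak{G}$, together with the uniqueness clause of Lemma~\ref{dfree}. To build the action, observe that every $\gamma\in\mathfrak{G}$ is a biholomorphic isometry of $(X,g,J)$, so $\gamma^{*}$ commutes with $\square$, preserves $L^{2}$-norms and decay orders, fixes the subbundles $\Lambda^{1,1}$ and $End_{a}(TX)$, and conjugates $\WW$ to itself; hence it preserves $\VV$ and its $L^{2}$-orthogonal complement $\H_{ess}(X,End_{a}(TX))$, acts orthogonally on the parameter space $\H_{ess}(X,End_{a}(TX))\times\H_{-3}(X,\Lambda^{1,1})$, and preserves the balls $B^{1}_{\epsilon_{1}}\times B^{2}_{\epsilon_{2}}$. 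Since the isometry group of a non-flat ALE manifold is compact, this is a smooth action of a compact group. The substantive point is that $F$ intertwines it with pullback of metrics, $F(\gamma^{*}p)=\gamma^{*}F(p)$; I would check this stage by stage through Steps~I--IV, the reason being that each normalization used there --- the divergence-free slice of the Kuranishi family, the Kodaira--Spencer-type stability, and the LeBrun--Simanca reduction to the scalar-flat equation --- is defined purely from the $\mathfrak{G}$-invariant data $(g,J)$, so conjugating each construction by $\gamma$ leaves it invariant. This equivariance check is the only part of Steps~I--IV that must be revisited, and it is routine. Granting it, $\mathfrak{F}$ is preserved by pullback along elements of $\mathfrak{G}$, which defines the action of $\mathfrak{G}$ on $\mathfrak{F}$, and two metrics in one orbit differ by pullback along a diffeomorphism and so are isometric --- this is the first bullet.

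For the second bullet, suppose $g_{i}=F(p_{i})$ for $i=1,2$ and $\Phi^{*}g_{2}=g_{1}$ with $\Phi=\Phi_{Y}$ a small diffeomorphism, and let $J_{i}=J(p_{i})$ be the complex structure in the divergence-free gauged Kuranishi family for which $g_{i}$ is Kähler by construction. The crucial step is to promote the isometry $\Phi$ to a biholomorphism. Since $J_{2}$ is $g_{2}$-parallel, $I:=\Phi^{*}J_{2}$ is parallel, orthogonal, and orientation-compatible for $g_{1}=\Phi^{*}g_{2}$. When $(X,g,J)$ is not hyperkähler, $g_{1}$ is not Ricci-flat, so the only $g_{1}$-parallel orthogonal complex structures inducing the orientation are $\pm J_{1}$; smallness of $\Phi$ places $I$ near $J$, and as $-J_{1}$ is far from $J$ this forces $I=J_{1}$. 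Thus $\Phi$ is a biholomorphism of $(X,J_{1})$ onto $(X,J_{2})$ with $\Phi^{*}J_{2}=J_{1}$. Since $J_{1}$ also lies in the gauged Kuranishi family, $\Phi$ is a small diffeomorphism carrying the member $J_{2}$ back into that family; applying the uniqueness in Lemma~\ref{dfree} to $J_{2}$ --- for which the identity is one such diffeomorphism, and for which $\Phi$ qualifies once the smallness constants $\epsilon_{1},\epsilon_{3},\epsilon_{4}$ are chosen compatibly --- yields $\Phi=\mathrm{id}$, and hence $g_{1}=g_{2}$.

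The remaining hyperkähler case is where I expect the real difficulty: there $I$ need only lie in a small cap of the twistor sphere of $g_{1}$, so the rigidity just used can fail and one must analyze more carefully what the isometry does to the complex structure. I would split according to whether $g_{1}$ itself is Ricci-flat. If it is not, the argument of the previous paragraph applies verbatim. If it is, then $g_{2}$ is Ricci-flat as well, both lie in the hyperkähler sub-family of $\mathfrak{F}$, and since a small diffeomorphism acts trivially on $H^{2}(X)$ the metrics $g_{1}$ and $g_{2}$ have equal periods, whence $g_{1}=g_{2}$ by the classification of hyperkähler ALE surfaces \cite{KrII89}. This is consistent with Remark~\ref{hrem}: the same failure of rigidity of the compatible complex structure is precisely what makes the parametrization overcount in the ${\rm{SU}}(2)$ case. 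Everything else --- the equivariance bookkeeping in Steps~I--IV and the matching of the smallness constants --- is routine.
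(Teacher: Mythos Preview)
Your treatment of the first bullet --- defining the $\mathfrak{G}$-action via equivariance of $F$ --- matches the paper's own one-line summary (``keeping track of the action of $\mathfrak{G}$ in every step of the proof of Theorem~\ref{tmain}''), and the invariances you list are the right ones to check.

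For the second bullet in the non-Ricci-flat case, your rigidity argument (the only $g_1$-parallel orientation-compatible complex structures are $\pm J_1$, smallness forces $\Phi^*J_2=J_1$, then uniqueness in Lemma~\ref{dfree} gives $\Phi=\mathrm{id}$) is a correct and clean implementation of what the paper simply defers to \cite{HVI}. One cosmetic point: the claim ``when $(X,g,J)$ is not hyperk\"ahler, $g_1$ is not Ricci-flat'' should be justified by noting that $\Gamma\not\subset{\rm{SU}}(2)$ forces $c_1(X)\neq 0$, so no nearby metric can be Ricci-flat; since you later split on whether $g_1$ itself is Ricci-flat, this is harmless.

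The genuine gap is your hyperk\"ahler case. Kronheimer's Torelli theorem \cite{KrII89} classifies hyperk\"ahler ALE metrics only up to \emph{isometry}, not up to equality. From $\Phi^*g_2=g_1$ with $\Phi$ acting trivially on $H^2(X)$ you learn that $\Phi^*$ carries the $g_2$-hyperk\"ahler triple to \emph{some} $g_1$-hyperk\"ahler triple, so the period triples of $g_1$ and $g_2$ lie in the same ${\rm{SO}}(3)$-orbit; Torelli then returns only that $g_1$ and $g_2$ are isometric --- which was your hypothesis. The argument is circular. What you actually need is a slice property for the \emph{metrics} in $\mathfrak{F}$ under small diffeomorphisms, and in the Ricci-flat case this does not follow from Lemma~\ref{dfree} alone, precisely because $\Phi^*J_2$ is merely one point on the $g_1$-twistor sphere and need not coincide with $J_1$, so you cannot feed $\Phi$ back into the uniqueness clause. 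The paper does not spell out this step either --- it refers the entire statement to \cite{HVI} --- but your proposed route through Torelli does not close it.
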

This theorem was proved in \cite{HVI} more or less by keeping track of the action of $\mathfrak{G}$ in every step of the proof of Theorem \ref{tmain}. 
Since each orbit represents a unique isometry class
of metric (up to small diffeomorphism), we will refer to the quotient $\mathfrak{M} = \mathfrak{F}/\mathfrak{G}$ as the ``local moduli space
of scalar-flat K\"ahler ALE metrics near $g$.''
The local moduli space $\mathfrak{M}$ is not a manifold in general, but
since $\mathfrak{F}$ is of finite dimension, and $\mathfrak{G}$ is a compact group action on $\mathfrak{F}$, the dimension~$m$ of $\mathfrak{M} = \mathfrak{F}/\mathfrak{G}$ is well-defined.
In the non-hyperk\"ahler case, 
\begin{align}
m = d- ( \text{the dimension of a maximal orbit of } \mathfrak{G}),
\end{align}
where 
\begin{align}
d = \dim_{\RR} \big( \H_{ess} ( X,End_a(TX)) \big) 
+ b_2 (X),
\end{align}
where $b_2(X)$ is the second Betti number of $X$.
(For the hyperk\"ahler case, recall Remark~\ref{hrem}.) 
\begin{remark}{\em
We note that the local moduli space of metrics contains small rescalings, i.e, $g\mapsto \frac{1}{c^2}g(c\cdot,c\cdot)$ for $c$ close to $1$. If one considers scaled 
metrics as equivalent (which we do not), then the dimension would decrease by $1$.
}
\end{remark}

\section{The case of the minimal resolution}
\label{defminsec}
Let $X$ denote the minimal resolution of $\CC^2/\Gamma$, where $\Gamma$ is a finite subgroup of ${\rm{U}}(2)$ without complex reflections. 
The divisor $E = \cup_i E_i$ is a union of irreducible components which are
rational curves, with only normal crossing singularities.
Let $Der_{E}({X})$ denote the sheaf dual to logarithmic $1$-forms along $E$ (see \cite{Kawamata}).  We note that $Der_{E}({X})$ is a locally free sheaf of rank $2$, see \cite{Wahl1975}.
Away from $E$, this is clear. If $p\in E_i$, we can choose a holomorphic coordinate chart $\{z_1, z_2\}$ such that near $p$, $E_i = \{z_1=0\}$. Then local sections 
of $Der_{E}({X})$ are generated by $\{z_1\frac{\partial}{\partial z_1},\frac{\partial}{\partial z_2}\}$. 

Since $E$ is composed of rational curves whose self-intersection numbers are negative, we have $H^0(E,\oo_E(E))=0$. The short exact sequence
\begin{align}
0\rightarrow Der_E({X}) \rightarrow \Theta_{{X}} \rightarrow \oo_E(E)\rightarrow 0,
\end{align}
then induces an exact sequence of cohomologies
\begin{align}
\label{es}
0\rightarrow H^1(X,Der_E(X))\rightarrow H^1(X,\Theta)\rightarrow H^1(E,\oo_E(E))\rightarrow H^2(X,Der_E(X)).
\end{align}
By Siu's vanishing theorem (\cite{Siu}), since $X$ is a non-compact $\sigma$-compact complex manifold, for any coherent analytic sheaf $\mathscr{F}$ on $X$, the top degree sheaf cohomology $H^2(X,\mathscr{F})$ is trivial. Consequently, $H^2(X,Der_E(X))=0$,

 In \cite{HVI} we cited several papers from algebraic geometry  \cite{BKR, Brieskorn, Laufer1973, Wahl1975}, to conclude that  $H^1(X,Der_E(X))=0$. 
In this section, we will give a different proof of the following result, using some tools from geometric analysis.
\begin{theorem}
\label{dimh1t}
For $X$ the minimal resolution of $\CC^2/\Gamma$, we have
\begin{align}
\label{dimh1}
\dim_{\CC}(H^1(X,\Theta)) = \sum_{j=1}^{k_\Gamma}(e_j-1).
\end{align}
\end{theorem}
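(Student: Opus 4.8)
The plan is to compute $\dim_{\CC}(H^1(X,\Theta))$ by identifying it, via Hodge-theoretic arguments adapted to the ALE setting, with a space of decaying harmonic forms valued in $\Theta$, and then counting the dimension of that space directly on the minimal resolution. Concretely, the first step is to establish an isomorphism $H^1(X,\Theta)\cong \H_{-3}(X,\Lambda^{0,1}\otimes\Theta)$ (the relevant space from Section \ref{construction}), using the Dolbeault description of $H^1$ together with a weighted Hodge decomposition for the $\bar\partial$-Laplacian $\square$ on the non-compact manifold $X$; here one invokes the fact, already used in Step I of the outline, that on an ALE manifold the sheaf cohomology $H^1$ is represented by appropriately decaying harmonic forms rather than by $L^2$ forms, and one must be careful about which weight $\tau$ gives a genuine isomorphism (the critical exponents of the Laplacian on $\Lambda^{0,1}\otimes\Theta$ force $\tau=-3$, since $\Theta$ is asymptotically $\CC^2$ and $\square$-harmonic sections decay like powers of $r$; the gap between $-3$ and the next admissible rate is where the finite-dimensionality and the precise count come from).

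The second step is to reduce the count on $X$ to the exceptional divisor. Using the logarithmic exact sequence $0\to Der_E(X)\to\Theta_X\to\oo_E(E)\to 0$ together with the two vanishing facts already recorded in the excerpt — namely $H^1(X,Der_E(X))=0$ (to be reproved here by a geometric-analytic argument: realize $Der_E(X)$-cohomology as decaying harmonic sections and show the corresponding space is trivial, e.g.\ by a Bochner/Weitzenb\"ock identity on the ALE scalar-flat K\"ahler metric forcing such sections to be parallel and hence zero) and $H^2(X,Der_E(X))=0$ (Siu) — the long exact sequence \eqref{es} yields $H^1(X,\Theta)\cong H^1(E,\oo_E(E))$. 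So it remains to compute $\dim_{\CC} H^1(E,\oo_E(E))$.

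The third step is the computation of $H^1(E,\oo_E(E))$ on the tree (or string) of rational curves $E=\cup_{j=1}^{k_\Gamma} E_j$ with $E_j\cong\PP^1$ and $E_j\cdot E_j=-e_j$. Since $\oo_E(E)$ is a line bundle on the reducible nodal curve $E$, I would use the normalization sequence relating $\oo_E(E)$ to the direct sum $\bigoplus_j \oo_{\PP^1}(E\cdot E_j)=\bigoplus_j\oo_{\PP^1}(-e_j+m_j)$, where $m_j$ is the number of intersection points of $E_j$ with the other components (so that $E\cdot E_j = -e_j + m_j$ on $E_j$), twisted by the structure sheaf of the set of nodes. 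A Mayer–Vietoris / \v{C}ech computation over the components and nodes then gives $H^0(E,\oo_E(E))=0$ (already noted, since each $\oo_{\PP^1}(-e_j+m_j)$ has negative degree once one checks $e_j>m_j$, which holds because the resolution graph is negative-definite) and $H^1(E,\oo_E(E))$ of dimension $\sum_j h^1(\PP^1,\oo_{\PP^1}(-e_j+m_j)) - (\text{corrections from nodes})$; carefully bookkeeping the Euler characteristic via $\chi(\oo_E(E))=\sum_j\chi(\oo_{\PP^1}(-e_j+m_j)) - (\#\text{nodes})$ and using that $E$ is a tree (so $\#\text{nodes} = k_\Gamma-1$ and $\sum_j m_j = 2(k_\Gamma-1)$) collapses the sum to $\sum_{j=1}^{k_\Gamma}(e_j-1)$.

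The main obstacle I anticipate is Step 1: pinning down the correct weighted Hodge theory so that $H^1(X,\Theta)$ is exactly $\H_{-3}(X,\Lambda^{0,1}\otimes\Theta)$ with no extra contributions or deficiencies — this requires knowing the indicial roots of $\square$ on $\Lambda^{0,1}\otimes\Theta$ over the asymptotic cone $(\RR^4\setminus\overline B)/\Gamma$, ruling out $L^2$ cohomology in the intermediate range, and matching the Dolbeault group to the harmonic space across the non-compact end (this is essentially the content of the Hein–LeBrun–Maskit analysis of compactifications invoked in Step I). The reproof of $H^1(X,Der_E(X))=0$ by a vanishing/Weitzenb\"ock argument is the second delicate point, since $Der_E(X)$ is not the full tangent sheaf and one must control the logarithmic behaviour along $E$; everything after the reduction to $E$ is a finite, purely algebraic computation on the resolution graph.
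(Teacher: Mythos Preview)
Your reduction via the logarithmic exact sequence \eqref{es} is the right skeleton, and it is the same one the paper uses. The substantive gap is in your Step~2: you propose to kill $H^1(X,Der_E(X))$ by a Bochner/Weitzenb\"ock argument on the scalar-flat K\"ahler ALE metric. No such argument is available. Scalar-flatness gives no sign on the curvature term in the Bochner--Kodaira formula for $\Lambda^{0,1}\otimes Der_E(X)$, and the logarithmic twist along $E$ does not produce the positivity needed to force decaying harmonic sections to vanish. The paper does something quite different. In the cyclic case (Lemma~\ref{cyclic}) it writes down the explicit toric cover $\{Y_i\}$ from the Hirzebruch--Jung data, expands \v{C}ech $1$-cochains in Laurent series, and checks by hand that every cocycle is a coboundary. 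In the non-cyclic case (Proposition~\ref{counting}) it does \emph{not} prove $H^1(X,Der_E(X))=0$ directly at all: it glues Calderbank--Singer pieces onto a LeBrun orbifold, uses the gluing/right-inverse construction to compute $\dim\ker P_X=\dim\H_{-3}(X,\Lambda^{0,1}\otimes\Theta)$ as the sum of the pieces, and then squeezes
\[
\dim H^1(E,\oo_E(E))\;\leq\;\dim H^1(X,\Theta)\;\leq\;\dim\H_{-3}(X,\Lambda^{0,1}\otimes\Theta)\;=\;\dim H^1(E,\oo_E(E)),
\]
so the vanishing of $H^1(X,Der_E(X))$ falls out \emph{a posteriori} from the dimension count. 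Note in particular that your Step~1 isomorphism $H^1(X,\Theta)\cong\H_{-3}$ is not an input here; only the surjectivity of the natural map \eqref{natmap} is used, and the isomorphism is another output of the sandwich.

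A secondary issue: in Step~3 you treat $\oo_E(E)$ as the line bundle $\oo_X(E)|_E$ on the nodal curve and run a normalization/Mayer--Vietoris computation. That is not the quotient in the sequence $0\to Der_E(X)\to\Theta_X\to\oo_E(E)\to 0$; at a node $\{z_1=0\}\cap\{z_2=0\}$ the cokernel of $\{z_1\partial_{z_1},z_2\partial_{z_2}\}\hookrightarrow\{\partial_{z_1},\partial_{z_2}\}$ is $\oo_{E_1}\cdot\partial_{z_1}\oplus\oo_{E_2}\cdot\partial_{z_2}$, so globally $\oo_E(E)=\bigoplus_j N_{E_j/X}=\bigoplus_j\oo_{\PP^1}(-e_j)$. (With your interpretation the Euler characteristic gives $\sum e_j-2k_\Gamma+1$, not $\sum(e_j-1)$; your claimed collapse does not happen.) With the correct interpretation the computation is immediate: $h^1(\PP^1,\oo(-e_j))=e_j-1$ and there is nothing to glue.
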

In relation to the construction of the moduli space of scalar-flat K\"ahler ALE metrics,  we need to construct a weighted version of Hodge theory, that links the sheaf cohomology with the decaying harmonic forms. Recall that in Theorem \ref{tmain}, the deformations of 
complex structure are parametrized by decaying harmonic sections 
in $\H_{-3}(X,\Lambda^{0,1}\otimes\Theta)$ which are ``essential'', that is, they are in the subspace $\VV$.  But, in the case of the minimal resolution, it turns out the dimension 
of this space is equal to the dimension of $H^1(X,\Theta)$.
\begin{theorem}[\cite{HVI}]
\label{t2.3}
Let $(X,g,J)$ denote the minimal resolution of $\CC^2/\Gamma$ with any ALE K\"ahler metric $g$ of order $\tau > 1$. Then
\begin{align}
H^1(X,\Theta) \cong  \H_{-3}(X,\Lambda^{0,1}\otimes\Theta)
\cong \H_{ess}(X,\Lambda^{0,1}\otimes\Theta)
\end{align}
\end{theorem}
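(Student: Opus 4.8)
The plan is to treat both isomorphisms through a single weighted‑Hodge‑theoretic mechanism, isolating the one genuinely minimal‑resolution‑specific input, namely the vanishing of $\VV$. I would organize everything around the natural comparison map
\[
\Psi:\H_{-3}(X,\Lambda^{0,1}\otimes\Theta)\longrightarrow H^1(X,\Theta),\qquad \theta\mapsto[\theta],
\]
sending a $\square$‑harmonic, $O(r^{-3})$ form to its Dolbeault (hence sheaf) class. Since $-3<-2$, every $\theta\in\H_{-3}$ lies in $L^2$ together with its derivatives, so a standard integration by parts gives $\bar\partial\theta=0$ and $\bar\partial^{*}\theta=0$; thus $\Psi$ is well defined. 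The theorem follows once I show (i) $\Psi$ is surjective, (ii) $\ker\Psi=\VV$, and (iii) $\VV=0$: then (i) with (ii)--(iii) gives the first isomorphism $H^1(X,\Theta)\cong\H_{-3}$, while (iii) gives $\H_{ess}=\H_{-3}\ominus\VV=\H_{-3}$, the second isomorphism. Here $\H_{ess}(X,\Lambda^{0,1}\otimes\Theta)$ is understood, via the $Re$‑isomorphism of diagram \eqref{cd1}, as the $L^2$‑orthogonal complement of $\VV$.

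For the analytic part I would set up weighted Fredholm theory for $\square$ on $C^{k,\alpha}_{\delta}(\Lambda^{0,1}\otimes\Theta)$. Since this bundle is scale‑invariant on the end $(\RR^4\setminus\overline B)/\Gamma$, its indicial roots are governed by those of the scalar Laplacian on $S^3/\Gamma$, and the resulting weighted Hodge decomposition identifies $\H_{-3}$ with the weighted Dolbeault cohomology in bidegree $(0,1)$. Surjectivity of $\Psi$ then amounts to producing, for every class, a representative decaying like $r^{-3}$: I would use that $X$ retracts onto the compact exceptional divisor $E$ and that $H^2(X,\Theta)=0$ by Siu's vanishing theorem (exactly as for $Der_E(X)$ above), so that given a $\bar\partial$‑closed $\phi$ one solves $\bar\partial u=\phi$ on the end in weighted spaces to push $\phi$ to a globally defined, decaying $\bar\partial$‑closed form, whose $\square$‑harmonic projection lies in $\H_{-3}$ and represents the same class.

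The heart of the proof, and the step I expect to be the main obstacle, is (ii)--(iii). For (ii): if $\Psi(\theta)=0$ then $\theta=\bar\partial u$ for a global smooth $u\in\Gamma(\Theta)$, and since $\theta$ decays, weighted elliptic estimates bound the growth of $u$ by the first positive indicial rate, so the leading term of $u$ is a holomorphic linear field; the gauge conditions defining $\WW$ force this field to lie in $\mathfrak{u}(2)$ and to be $\Gamma$‑invariant, whence $\theta\in\VV$ and $\ker\Psi=\VV$. For (iii): given $Z\in\WW$ with leading linear term $Z_\infty\in\mathfrak{u}(2)^{\Gamma}$, a holomorphic Killing field of the flat cone descending to $\CC^2/\Gamma$, I use that the minimal resolution $\pi\colon X\to\CC^2/\Gamma$ is functorial for automorphisms, so $Z_\infty$ lifts to a genuine holomorphic vector field $\tilde Z$ on all of $X$. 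Then $\theta=J\circ\mathfrak{L}_Z J$ is, up to the $Re$‑identification and a nonzero constant, equal to $\bar\partial Z^{1,0}=\bar\partial(Z^{1,0}-\tilde Z^{1,0})$, and the corrected potential $w=Z^{1,0}-\tilde Z^{1,0}$ now decays, since its linear terms cancel and the remainder is controlled by the order $\tau>1$ of the metric. Pairing $\theta$ with $\bar\partial w$, integrating by parts, and using $\bar\partial^{*}\theta=0$ leaves only a flux over the sphere of radius $R$ of size $|\theta|\cdot|w|\cdot R^{3}=O(R^{-3})\cdot|w|\cdot R^{3}=|w(R)|\to0$; hence $\|\theta\|_{L^2}^{2}=0$ and $\theta=0$, giving $\VV=0$.

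The delicate points to nail down are exactly these last sphere‑flux estimates — checking that after cancelling the linear terms the potential $w$ decays strictly, so that $|w(R)|\to0$; this is precisely where the order $\tau>1$, which for nontrivial $\Gamma$ suppresses all growing and constant asymptotic modes, enters — and the lifting of $Z_\infty$ across $E$. The latter is what separates the minimal resolution from a general ALE space, on which $\VV$ may be nonzero, and is the reason the essential deformation space $\H_{ess}$ and the full decaying harmonic space $\H_{-3}$ coincide here.
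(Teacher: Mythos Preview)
The present survey does not give its own proof of this theorem; it is quoted from \cite{HVI}. The only place a related argument appears here is in the proof of Proposition~\ref{counting}, where surjectivity of the natural map $\H_{-3}\to H^1(X,\Theta)$ (again cited from \cite{HVI}) is combined with an index-theoretic dimension computation, obtained by gluing Calderbank--Singer pieces onto a LeBrun orbifold, and the sandwich $\dim H^1(E,\oo_E(E))\le\dim H^1(X,\Theta)\le\dim\H_{-3}$ then forces equality. So to the extent the paper has a route to the first isomorphism, it is a dimension count rather than your direct argument.

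Your direct approach is natural, and the minimal-resolution input you isolate in step~(iii) --- that every $Z_\infty\in\mathfrak{u}(2)^\Gamma$ lifts to a global holomorphic vector field on $X$ --- is exactly the right idea. But step~(ii) has a real gap. You assert that ``weighted elliptic estimates bound the growth of $u$ by the first positive indicial rate'' and that ``the gauge conditions defining $\WW$ force this field to lie in $\mathfrak{u}(2)$''. Neither holds as written: the potential $u$ with $\bar\partial u=\theta$ is determined only modulo $H^0(X,\Theta)$, and its leading part is some $\Gamma$-invariant polynomial vector field, with no a priori reason to be linear; the conditions defining $\WW$ are \emph{requirements} for membership, not consequences of the equation $\bar\partial u=\theta$. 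Indeed, the paper notes immediately after Theorem~\ref{t2.3} that the statement fails on Stein ALE surfaces, where $H^1(X,\Theta)=0$ while $\H_{-3}\ne0$; there $\ker\Psi=\H_{-3}$, which is strictly larger than $\VV$, so $\ker\Psi=\VV$ cannot be the general weighted-Hodge fact you present it as. The repair is to invoke the lifting property already at this stage: on the minimal resolution \emph{every} $\Gamma$-invariant polynomial vector field, not just the linear Killing ones, extends to a global holomorphic section of $\Theta$, so one may subtract a suitable holomorphic field to make $u$ itself decay, after which your boundary-flux integration gives $\theta=0$ directly. Thus both $\ker\Psi=0$ and $\VV=0$ rest on the same lifting property, and your clean separation of (ii) as general and (iii) as specific does not survive.
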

As a consequence, the dimension of the space of essential deformations is given by \eqref{dimh1}.  This is very special to the case of the minimal resolution. In the Stein case, Theorem \ref{t2.3} is not true in general because the sheaf cohomology group necessarily vanishes.
\subsection{Cyclic quotient singularity}
We will first prove Theorem \ref{dimh1t} in the case of a cyclic group, using a direct arguments involving only sheaf theory. 
Consider a cyclic quotient singularity of the form 
$\Gamma = \frac{1}{p}(1,q)$ ($p\geq q$). 
We will first give some additional detail regarding the Hirzebruch-Jung resolutions. Details can be found in \cite{Re, Ko07}.

The continued fraction described below in formula \eqref{fraction},  
can also be represented by lattice points
\begin{align}
c_0 = (1,0), c_1=\frac{1}{p}(1,q),\ldots, c_{m+1}=(0,1),
\end{align}
with iterative relation 
\begin{align}
\left( \begin{matrix}c_i \\ c_{i+1}\end{matrix}\right) = \left(\begin{matrix}0 && 1\\ -1 && e_i\end{matrix}\right)\cdot \left(\begin{matrix}c_{i-1} \\ c_i\end{matrix}\right).
\end{align}

Meanwhile, the dual continued fraction $\frac{p}{p-q} = [a_1,\ldots,a_k]$ can be used to give the invariant polynomials: 
\begin{align}
\label{invpoly}
u_0 = x^p, u_1=x^{p-q}y,u_2,\ldots,u_k,u_{k+1}=y^p,
\end{align}
which satisfy the relation $u_{i-1}u_{i+1}=u_i^{a_i}$.

The polynomials $\{u_0,\ldots,u_{k+1}\}$ give an embedding of the cone in $\CC^{k+2}$.
Let 
\begin{align}
c_0 = (s_0,t_0), \ldots, c_{m+1} = (s_{m+1}, t_{m+1})
\end{align}
be lattice points, where $s_0 = 0, t_0 = 1, s_m = 1, t_m=0, s_{i+1}>s_i, t_{i+1}<t_i$.
Let $\{\eta_i,\xi_i\}$ ($0\leq i\leq m+1$) be monomials forming the dual basis 
to $\{c_i,c_{i+1}\}$, i.e., 
\begin{align}
c_i(\eta_i)=1, c_i(\xi_i)=0, c_{i+1}(\eta_i)=0, c_{i+1}(\xi_i)=1.
\end{align}
\begin{proposition}
The numbers $s_i, t_i$ satisfy the relation
\begin{align} 
\label{recrel}
t_is_{i+1}-t_{i+1}s_i = \frac{1}{p}.
\end{align}
\end{proposition}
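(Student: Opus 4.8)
The plan is to prove the identity \eqref{recrel} by a short induction on $i$, using the three-term recursion satisfied by the lattice points $c_i$. First I would put the given matrix recursion into scalar form: since $\pmat{0}{1}{-1}{e_i}$ sends the pair $(c_{i-1},c_i)$ to the pair $(c_i,\,e_ic_i-c_{i-1})$, the recursion is equivalent to
\begin{align}
c_{i+1}=e_ic_i-c_{i-1},\qquad 1\le i\le m,
\end{align}
that is, $s_{i+1}=e_is_i-s_{i-1}$ and $t_{i+1}=e_it_i-t_{i-1}$ in coordinates.

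Next, set $D_i:=t_is_{i+1}-t_{i+1}s_i$; this is $\pm$ the signed area of the lattice parallelogram spanned by $c_i$ and $c_{i+1}$. Substituting the recursion and cancelling the two terms proportional to $e_is_it_i$,
\begin{align}
D_i=t_i(e_is_i-s_{i-1})-(e_it_i-t_{i-1})s_i=t_{i-1}s_i-t_is_{i-1}=D_{i-1},
\end{align}
so $D_i$ does not depend on $i$ for $0\le i\le m$. (Conceptually, passing from the ordered pair $(c_{i-1},c_i)$ to $(c_i,c_{i+1})$ is a change of basis by a matrix of determinant $1$, so it preserves the covolume.) It therefore suffices to evaluate $D_0$.

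For that base case, the normalization $c_0=(0,1)$ gives $s_0=0$, $t_0=1$, hence $D_0=t_0s_1-t_1s_0=s_1$; and $s_1=\tfrac1p$ because $c_0$ and $c_1$ form a $\ZZ$-basis of the lattice $N=\ZZ^2+\ZZ\cdot\tfrac1p(1,q)$ --- a property built into the continued-fraction construction of the minimal resolution, each of whose cones is smooth --- while $N$ has covolume $\tfrac1p$ over $\ZZ^2$. Therefore $D_i=D_0=\tfrac1p$ for every $i$, which is precisely \eqref{recrel}. The argument is completely elementary, and I do not foresee any real obstacle: the only points needing care are fixing the sign in $D_i$ so that it agrees with the sign in \eqref{recrel}, and the bookkeeping of the endpoint data --- in particular verifying that the first coordinate of $c_1$ is exactly $\tfrac1p$, which is where the integer $p$ enters the formula.
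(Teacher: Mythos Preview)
Your proof is correct and is essentially the same as the paper's: an induction showing the determinant $D_i=t_is_{i+1}-t_{i+1}s_i$ is independent of $i$ via the three-term recursion $c_{i+1}=e_ic_i-c_{i-1}$, together with the base case $D_0=\tfrac1p$ read off from $c_0=(0,1)$ and $c_1=\tfrac1p(1,q)$. The only cosmetic difference is that for the base case you invoke the covolume of the lattice $\ZZ^2+\ZZ\cdot\tfrac1p(1,q)$, whereas the paper simply plugs in the explicit coordinates of $c_0$ and $c_1$; these amount to the same thing.
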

\begin{proof}
We prove it by induction. First, note that $c_0 = (0,1), c_1 = \frac{1}{p}(1,q)$. 
Then $t_0s_1-t_1s_0 = \frac{1}{p}$. Next, assume that $t_{i-1}s_i-t_is_{i-1}=\frac{1}{p}$. 
By the recursive formula 
 $c_{i+1}+c_{i-1} = a_ic_i$, it follows that 
\begin{align}
(s_{i+1},t_{i+1})+(s_{i-1},t_{i-1}) = a_i(s_i,t_i).
\end{align}
Then we have
\begin{align}
 s_{i+1} = a_is_i-s_{i-1},  \ t_{i+1} = a_it_i-t_{i-1}. 
\end{align}
So finally, 
\begin{align} 
t_is_{i+1}-t_{i+1}s_i = t_i(a_is_i-s_{i-1})-(a_it_i-t_{i-1})s_i = t_{i-1}s_i-t_is_{i-1} = \frac{1}{p}.
\end{align}
\end{proof}
By the formula \eqref{recrel}, we have that $\eta_i = p\cdot (-t_{i+1},s_{i+1}), \xi_i = p\cdot (t_i,-s_i)$. Then 
\begin{align}
\label{cover}
\xi_i = \frac{x^{pt_i}}{y^{ps_i}}, \eta_i = \frac{y^{ps_{i+1}}}{x^{pt_{i+1}}}, \xi_{i+1} = \frac{x^{pt_{i+1}}}{y^{ps_{i+1}}}, \eta_{i+1} = \frac{y^{ps_{i+2}}}{x^{pt_{i+2}}}.
\end{align}
It follows that the coordinate transition from  
$\{ \eta_i,\xi_i \}$ to $\{ \eta_{i+1},\xi_{i+1} \}$ for 
$\xi_{i+1}\neq 0$, is given by 
\begin{align}
\label{ct}
\eta_i = \xi_{i+1}^{-1},\ \eta_{i+1} = \eta_i^{e_{i+1}}\xi_i, \; (0\leq i\leq m-1)
\end{align}
which defines an acyclic cover $Y = Y_0\cup Y_1\ldots \cup Y_m$ of $X$ satisfying  
\begin{align}
Y_i\cap Y_{i+1} \simeq \CC\times \CC^*, \ Y_i\cap Y_{i+k} = Y_i\cap Y_{i+1}\ldots\cap Y_{i+k},
\end{align}
see \cite[Theorem 3.2]{Re}.
For use below, we record the following formulae:
\begin{align}
\label{Der}
\begin{split}
\frac{\partial}{\partial \eta_i} &= \frac{1}{\eta_i}(s_i x\frac{\partial}{\partial x} + t_i y\frac{\partial}{\partial y})\\
\frac{\partial}{\partial \xi_i} &= \frac{1}{\xi_i}(s_{i+1}x\frac{\partial}{\partial x}+t_{i+1} y\frac{\partial}{\partial y}) \\
\frac{\partial}{\partial x} &= \frac{p}{x}(t_i\xi_i\frac{\partial}{\partial\xi_i}-t_{i+1}\eta_i\frac{\partial}{\partial\eta_i})\\
\frac{\partial}{\partial y} &=\frac{p}{y}(-s_i\xi_i\frac{\partial}{\partial\xi_i}+s_{i+1}\eta_i\frac{\partial}{\partial\eta_i}).
\end{split}
\end{align}
With these preliminaries, we can now prove the following result. 
\begin{lemma}
\label{cyclic}
When $\Gamma$ is cyclic, $H^1(X,Der_E(X)) = 0$ for the minimal resolution $X$ of $\CC^2/\Gamma$.
\end{lemma}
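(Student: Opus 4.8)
The plan is to compute $H^1(X, Der_E(X))$ via the \v{C}ech cohomology of the acyclic cover $Y = Y_0 \cup \dots \cup Y_m$ of $X$ described above. By \cite{Re} each $Y_i$ is affine, each overlap $Y_i \cap Y_{i+1} \simeq \CC \times \CC^*$ is Stein, and every higher intersection collapses to the intersection of its two extreme charts, $Y_{i_0} \cap \dots \cap Y_{i_p} = Y_{i_0} \cap Y_{i_p}$; in particular the cover is Leray, so $H^q(X, Der_E(X)) \cong \check{H}^q(\{Y_i\}, Der_E(X))$. The form I would actually use is a filtration: set $X_{(j)} := Y_0 \cup \dots \cup Y_j$. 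The collapsing property forces $X_{(j)} \cap Y_{j+1} = \bigcup_{i \le j}(Y_i \cap Y_{j+1}) = Y_j \cap Y_{j+1}$, since each smaller overlap is contained in $Y_j \cap Y_{j+1}$. Thus $X_{(j+1)} = X_{(j)} \cup Y_{j+1}$ is a two-piece open cover with overlap $Y_j \cap Y_{j+1}$.

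I would then induct on $j$, using Mayer--Vietoris, to show $H^1(X_{(j)}, Der_E(X)) = 0$. The base case $X_{(0)} = Y_0$ is affine. For the step: $H^1(Y_{j+1}, Der_E(X)) = 0$ ($Y_{j+1}$ affine), $H^1(Y_j \cap Y_{j+1}, Der_E(X)) = 0$ ($Y_j \cap Y_{j+1} \simeq \CC \times \CC^*$ is Stein), and $H^1(X_{(j)}, Der_E(X)) = 0$ by induction; so Mayer--Vietoris shows $H^1(X_{(j+1)}, Der_E(X)) = 0$ provided the restriction map $\rho_j \colon Der_E(X)(Y_{j+1}) \to Der_E(X)(Y_j \cap Y_{j+1})$ is surjective --- for then the connecting homomorphism $H^0(Y_j \cap Y_{j+1}) \to H^1(X_{(j+1)})$ vanishes, and $H^1(X_{(j+1)})$ injects into $H^1(X_{(j)}) \oplus H^1(Y_{j+1}) = 0$. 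Hence the lemma is reduced to showing that $\rho_j$ is onto for every $j = 0, \dots, m-1$.

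To establish this I would work in the explicit toric charts, using the transition rule \eqref{ct} and the derivative formulae \eqref{Der}. On $Y_j \cap Y_{j+1} = \{\xi_{j+1} \ne 0\}$ exactly one component of $E$ survives, the exceptional curve $\{\eta_{j+1} = 0\} = \{\xi_j = 0\}$, so $Der_E(X)(Y_j \cap Y_{j+1})$ is generated over $\CC[\eta_{j+1}, \xi_{j+1}^{\pm 1}]$ by $\eta_{j+1}\partial_{\eta_{j+1}}$ and $\partial_{\xi_{j+1}}$. On the other hand $Der_E(X)(Y_{j+1})$ is generated over $\CC[\eta_{j+1}, \xi_{j+1}]$ by $\eta_{j+1}\partial_{\eta_{j+1}}$ and $\xi_{j+1}\partial_{\xi_{j+1}}$ when $Y_{j+1}$ is an interior chart (both coordinate axes lie in $E$), and by $\eta_{j+1}\partial_{\eta_{j+1}}$ and $\partial_{\xi_{j+1}}$ when $j+1 = m$ (the remaining ray bounds the original cone, so $\{\xi_m = 0\}$ is a strict-transform boundary divisor, not part of $E$). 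Since $\xi_{j+1}$ is a unit on $Y_j \cap Y_{j+1}$, the submodule generated by $\xi_{j+1}\partial_{\xi_{j+1}}$ coincides with the one generated by $\partial_{\xi_{j+1}}$; in either case the images under $\rho_j$ of the two generators of $Der_E(X)(Y_{j+1})$ generate $Der_E(X)(Y_j \cap Y_{j+1})$, so $\rho_j$ is surjective (indeed an isomorphism). This closes the induction and gives $H^1(X, Der_E(X)) = H^1(X_{(m)}, Der_E(X)) = 0$.

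The step I expect to be the main obstacle is this last, local one: correctly identifying, in each chart $Y_i$, which coordinate axis belongs to $E$ and which is a boundary (strict-transform) divisor --- a distinction that behaves differently at the two end charts $Y_0$ and $Y_m$ --- and checking via \eqref{ct} and \eqref{Der} that on each overlap only the single curve $\{\eta_{j+1} = 0\}$ remains and that the log-tangent generators transform as claimed. The remaining ingredients (Leray's theorem, the combinatorial identity $X_{(j)} \cap Y_{j+1} = Y_j \cap Y_{j+1}$, Mayer--Vietoris, and the vanishing of coherent cohomology on the affine and Stein pieces) are formal.
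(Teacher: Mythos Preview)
Your Mayer--Vietoris induction is a clean repackaging of the paper's \v{C}ech computation, but the key local step is wrong: the restriction $\rho_j\colon Der_E(X)(Y_{j+1}) \to Der_E(X)(Y_j \cap Y_{j+1})$ is \emph{not} surjective. You argue that the images of the two generators of $Der_E(X)(Y_{j+1})$ generate the target as a module over $\mathcal{O}(Y_j\cap Y_{j+1})=\CC[\eta_{j+1},\xi_{j+1}^{\pm1}]$; that is true but irrelevant, since $\rho_j$ is only $\mathcal{O}(Y_{j+1})$-linear, not $\mathcal{O}(Y_j\cap Y_{j+1})$-linear. Concretely, for an interior chart the image of $\rho_j$ consists of sections $f\,\eta_{j+1}\partial_{\eta_{j+1}}+g\,\xi_{j+1}\partial_{\xi_{j+1}}$ with $f,g\in\CC[\eta_{j+1},\xi_{j+1}]$, so neither $\xi_{j+1}^{-1}\eta_{j+1}\partial_{\eta_{j+1}}$ nor the bare $\partial_{\xi_{j+1}}$ lies in the image. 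In particular $\rho_j$ is certainly not an isomorphism.

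What the paper actually does (and what your Mayer--Vietoris step really needs) is the surjectivity of the \emph{combined} restriction
\[
Der_E(X)(Y_j)\ \oplus\ Der_E(X)(Y_{j+1})\ \longrightarrow\ Der_E(X)(Y_j\cap Y_{j+1}),
\]
established by rewriting a section from $Y_{j+1}$ in the $(\eta_j,\xi_j)$-coordinates via \eqref{ct} and observing that arbitrarily negative powers of $\eta_j$ (equivalently, of $\xi_{j+1}^{-1}$) occur; together with the non-negative powers coming from $Y_j$, this covers the full Laurent expansion. To run your induction you need the stronger statement with $X_{(j)}$ in place of $Y_j$, i.e.\ that the missing negative-power part extends from $Y_j$ to all of $X_{(j)}$; this is not automatic and is exactly the patching issue that the paper's \v{C}ech argument (working simultaneously with all $\theta_i$) circumvents.
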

\begin{proof}
From \eqref{ct} above, 
\begin{align}
\frac{\partial}{\partial \xi_{i+1}} &= -\eta_i^2\frac{\partial}{\partial \eta_i}+e_{i+1}\eta_i\xi_i\frac{\partial}{\partial \xi_i},\\
\frac{\partial}{\partial \eta_{i+1}} &= \eta_i^{-e_{i+1}}\frac{\partial}{\partial \xi_i}.
\end{align}
The sections of $Der_E(X)$ are generated by 
\begin{align}
\Big\{ \frac{\partial}{\partial\eta_i},\xi_i\frac{\partial}{\partial\xi_i} \Big\} ,  \ 
\Big\{ \frac{\partial}{\partial\xi_{i+1}},\eta_{i+1}\frac{\partial}{\partial\eta_{i+1}} \Big\}
\end{align}
on $Y_i,Y_{i+1}$ respectively.
For $\theta_i\in \Gamma(Y_i,Der_E (X))$, $\theta_i$ can be expanded as a Laurent series:
\begin{align} 
\theta_i = \sum_{k\geq 0,l\geq 0} a^i_{k,l} \eta_i^k\xi_i^l \frac{\partial}{\partial \eta_i}+b^i_{k,l}\eta_i^k\xi_i^{l+1}\frac{\partial}{\partial \xi_i}.
\end{align} 
For $\theta_{i+1}\in \Gamma(Y_{i+1},Der_E(X))$,
\begin{align}
\theta_{i+1} = \sum_{k\geq 0,l\geq 0}a^{i+1}_{k,l}\xi_{i+1}^k\eta_{i+1}^l\frac{\partial}{\partial \xi_{i+1}}+b^{i+1}_{k,l}\xi_{i+1}^k\eta_{i+1}^{l+1}\frac{\partial}{\partial\eta_{i+1}}.
\end{align}
For $\theta_{i,i+1}\in \Gamma(Y_i\cap Y_{i+1},Der_E(X))$ on the intersection $Y_i\cap Y_{i+1}$ where $\eta_i\neq 0$,
\begin{align}
\theta_{i,i+1} = \sum_{k\in\ZZ,l\geq 0} a^{i,i+1}_{k,l} \eta_i^k \xi_i^l\frac{\partial}{\partial\eta_i}+b^{i,i+1}_{k,l}\eta_i^k\xi_i^{l+1}\frac{\partial}{\partial\xi_i}.
\end{align}
By the transition formula \eqref{ct}, 
\begin{align}
\begin{split}
\theta_{i+1} = \sum_{k\geq 0,l\geq 0}\Big\{ &-a^{i+1}_{k,l}\eta_i^{-k+le_{i+1}+2}\xi_i^l\frac{\partial}{\partial \eta_i}\\
&+(a^{i+1}_{k,l}e_{i+1}\eta_i^{-k+le_{i+1}+1}\xi_i^{l+1}+b^{i+1}_{k,l}\eta_i^{-k+le_{i+1}}\xi_i^{l+1})\frac{\partial}{\partial \xi_i} \Big\},
\end{split}
\end{align}
which shows that the exponents of $\eta_i$ in $\theta_{i+1}$ can be any negative integers. Then it is clear that for any $\theta_{i,i+1}$, there exist $\theta_i,\theta_{i+1}$ such that on $Y_i\cap Y_{i+1}$, $\theta_{i,i+1} = \theta_{i+1}-\theta_i$. Furthermore, if $\{\theta_{k,l}\}$ $(k<l)$ is closed, then $\theta_{k,l} = \theta_{k,l+1}+\ldots+\theta_{l-1,l}$. Then $\{\theta_{k,l}\}$ is determined if and only if the set of consecutive elements $\{\theta_{i,i+1}\}$ is determined. 
These arguments imply that any closed $\{\theta_{k,l}\}$ is exact, so $H^1(X,Der_E(X)) = 0$.
\end{proof}

\subsection{Non-cyclic quotient singularities}
In Lemma \ref{cyclic}, we have shown that when $\Gamma$ is cyclic, $H^1(X,Der_E(X)) = 0$, which implies that $H^1(X,\Theta) \simeq H^1(E,\oo_E(E))$. 
In the following, we will use a relative index theorem to show this also holds for the general case. 
\begin{proposition}
\label{counting}
Let $X\mapsto \CC^2/\Gamma$ be a minimal resolution, where $\Gamma\subset {\rm{U}}(2)$ with no complex reflections. Then $H^1(X,\Theta)\simeq H^1(E, \oo_E(E))$ and $H^1(X,Der_E(X))=0$.
\end{proposition}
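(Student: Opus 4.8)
The plan is to combine the cyclic case already established in Lemma~\ref{cyclic} with a relative index argument for the non-cyclic case, so that in all cases the connecting map $H^1(E,\oo_E(E)) \to H^2(X,Der_E(X)) = 0$ together with $H^1(X,Der_E(X)) = 0$ forces the isomorphism $H^1(X,\Theta) \simeq H^1(E,\oo_E(E))$ in the exact sequence \eqref{es}. Since $H^2(X,Der_E(X))$ already vanishes by Siu's theorem, the only thing to prove is $H^1(X,Der_E(X)) = 0$; the isomorphism $H^1(X,\Theta) \simeq H^1(E,\oo_E(E))$ is then immediate from \eqref{es}.

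For the cyclic case there is nothing to do: Lemma~\ref{cyclic} gives $H^1(X,Der_E(X)) = 0$ directly. So first I would reduce the non-cyclic case to the cyclic case geometrically. The key observation is that a non-cyclic $\Gamma \subset \U(2)$ without complex reflections has the structure of a ``central curve with three Hirzebruch--Jung arms,'' and away from a neighborhood of the central rational curve the resolution $X$ looks like three cyclic quotient resolutions (corresponding to the cyclic subgroups governing the three chains). The idea is to interpret $Der_E(X)$ via a weighted Hodge-theoretic / $L^2$-cohomology model: using the weighted Hodge theory linking sheaf cohomology to decaying harmonic forms (cf.\ Theorem~\ref{t2.3} and the analytic framework of \cite{HVI}), the group $H^1(X,Der_E(X))$ is realized as a space of decaying harmonic sections of an associated bundle, and this space is the index (or a piece of the index) of an elliptic operator on $X$ with ALE/orbifold ends. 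One then compares this operator on $X$ with the model operator on the union of the three cyclic resolutions (glued to a standard piece near the central curve), invoking the relative index theorem for Fredholm operators on ALE manifolds: the difference of indices is a local/topological quantity computed near the central curve, and the contributions of the ends are precisely the cyclic contributions, each of which vanishes by Lemma~\ref{cyclic}. Assembling these, $H^1(X,Der_E(X)) = 0$.

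Concretely the steps are: (i) Set up the weighted Hodge theory identifying $H^1(X,Der_E(X))$ with a space of $L^2$ (or appropriately decaying) harmonic $Der_E$-valued $(0,1)$-forms, so that it becomes the kernel of a self-adjoint elliptic operator $P$ whose cokernel is controlled by the vanishing $H^2(X,Der_E(X)) = 0$ from Siu. (ii) Build a model space $X_0$ by cutting $X$ along small loops around the three ``branch points'' where the arms meet the central curve, replacing the central-curve region by a compact model and capping each arm by its own cyclic resolution $X_j$; equip the bundle $Der_E$ and the operator $P$ with compatible model data. (iii) Apply the relative index theorem (in the form used for ALE elliptic operators, e.g.\ the approach behind \cite{HVI}) to equate $\operatorname{ind}(P \text{ on } X)$ with $\sum_j \operatorname{ind}(P_j \text{ on } X_j)$ plus a compactly supported correction term. (iv) Observe that each $\operatorname{ind}(P_j)$ equals $\dim H^1(X_j, Der_{E_j}(X_j)) - \dim H^2(X_j, Der_{E_j}(X_j)) = 0$ by Lemma~\ref{cyclic} and Siu, and that the correction term vanishes by a local computation around the central curve (the central curve being a single rational curve, the local cohomology it contributes is zero). (v) Conclude $H^1(X,Der_E(X)) = 0$, hence $H^1(X,\Theta) \simeq H^1(E,\oo_E(E))$.

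The main obstacle I anticipate is step (iii)–(iv): making the relative index comparison rigorous, in particular choosing weights $\delta$ for which $P$ is Fredholm on all of $X$, $X_0$, and each $X_j$ simultaneously with no indicial roots on the circles $|\delta| = $ critical, and verifying that the ``excision'' contribution localized near the central rational curve genuinely vanishes rather than producing an extra term. Equivalently, one must check that the sheaf-theoretic short exact sequence decomposition is compatible with the analytic excision — i.e.\ that cutting $X$ into arms plus a central piece does not create spurious $H^1$ from the gluing annuli. I expect this to follow because each gluing region is of the form $\CC \times \CC^*$ (as in the cyclic transition computation \eqref{ct}) on which the relevant Čech complex is acyclic for $Der_E$, exactly mirroring the Laurent-series argument in the proof of Lemma~\ref{cyclic}; so in fact the non-cyclic proof can likely be made to run as a ``book-keeping'' extension of that argument, with the relative index theorem serving mainly to organize the counting. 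The remaining routine points — locally freeness of $Der_E(X)$ of rank $2$ (already noted, via \cite{Wahl1975}) and $H^0(E,\oo_E(E)) = 0$ (from negativity of self-intersections, already noted) — need no further comment.
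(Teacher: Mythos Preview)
Your overall strategy---reduce the non-cyclic case to the cyclic case via an analytic gluing/index comparison---is the same spirit as the paper's, but the execution differs in an essential way and your proposal has a real gap.

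The paper does \emph{not} attack $H^1(X,Der_E(X))$ directly. It works entirely with $\Theta$. Concretely, one glues Kähler ALE metrics on the three Calderbank--Singer resolutions $Y_1,Y_2,Y_3$ to a LeBrun orbifold $X_0$ (the central rational curve with three cyclic orbifold points), producing a Kähler ALE metric $\omega_{a,b}$ on $X$. For the operator $P=(\bar\partial^*,\bar\partial)$ acting on $\Lambda^{0,1}\otimes\Theta$ in weighted spaces with $\delta\in(-2,-1)$, one shows $P$ is Fredholm and \emph{surjective} on $X_0$ and on each $Y_j$; the existence of bounded right inverses then gives, for $a,b$ small, a bounded right inverse on $X$ and hence
\[
\dim\ker P_X \;=\; \dim\ker P_{X_0} \;+\; \sum_{j=1}^3 \dim\ker P_{Y_j}.
\]
Here $\dim\ker P_{X_0}=b_\Gamma-1$ (this is \cite[Proposition~6.1]{LV14}) and, by Lemma~\ref{cyclic} together with Theorem~\ref{t2.3}, $\dim\ker P_{Y_j}=\sum_i(e^j_i-1)$. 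Finally one sandwiches:
\[
\dim H^1(X,\Theta)\;\le\;\dim\H_{-3}(X,\Lambda^{0,1}\otimes\Theta)\;=\;\dim\ker P_X\;=\;\dim H^1(E,\oo_E(E))\;\le\;\dim H^1(X,\Theta),
\]
the outer inequalities coming from the surjection \eqref{natmap} and from \eqref{es} respectively. Equality forces both the isomorphism and $H^1(X,Der_E(X))=0$.

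Your step (iv) asserts that ``the correction term vanishes \ldots\ the local cohomology it contributes is zero.'' This is precisely where your argument breaks. In the $\Theta$-picture the central curve contributes $b_\Gamma-1\ge 1$, not zero; and if you insist on running the argument with $Der_E$ instead, you have neither a weighted Hodge identification of $H^1(X,Der_E(X))$ with a harmonic space (Theorem~\ref{t2.3} is only stated and proved for $\Theta$), nor a computation of $\ker P_{X_0}$ for the $Der_E$-operator. In other words, you have replaced a known nonzero contribution by an unproved vanishing claim. The paper avoids this by never needing $H^1$ of $Der_E$ on the central piece: it counts everything for $\Theta$, matches the total against the explicit number $\dim H^1(E,\oo_E(E))=\sum(e_i-1)$, and lets the exact sequence \eqref{es} do the rest.
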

\begin{proof}
Assume $\Gamma$ is non-cyclic. We will first construct
a K\"ahler form on
the minimal resolution $X$ of $\CC^2/\Gamma$ by gluing 
Calderbank-Singer ALE surfaces $Y_j$ ($j=1,2,3$) to a quotient of a LeBrun orbifold 
$X_0$, which has three cyclic quotient singularities on the central rational curve. 
The following is a sketch of the gluing procedure, details of a similar construction can be found in \cite{LV14}. Note that in that paper, this gluing was used to proced scalar-flat K\"ahler metrics, while in the following argument we will be considering a different operator $P$, see \eqref{Pdef} below.  

Let $x_i$, $i = 1,2,3$ be the cyclic quotient singularities of $X_0$ with 
group $\Gamma_i$. 
Let $(z_i^1,z_i^2)$ be local holomorphic coordinates on $U_i\setminus\{x_i\}$. 
Let $\omega_{X_0}$ be the K\"ahler form of the LeBrun metric. 
Then $\omega_{X_0}$ admits an expansion 
\begin{align}
\omega_{X_0} = \frac{\sqrt{-1}}{2}(\partial\bar\partial|z_i|^2+\partial\bar\partial\xi_i)
\end{align}
on $U_i\setminus\{x_i\}$, where $\xi_i$ is a potential function satisfying 
$\xi_i= O(|z_i|^4)$. 
For the LeBrun orbifold $X_0$, outside of a compact subset, it admits a holomorphic coordinate $(v_1,v_2)$. Let $Y_i$ denote the minimal resolution of $\CC^2/\Gamma_i$. 
Outside of a compact subset of $Y_i$, there exist holomorphic coordinates $(u^1_i,u^2_i)$.
Let $\omega_{Y_i}$ be a K\"ahler form on $Y_i$ corresponding to any Calderbank-Singer metric on $Y_i$. From \cite{RS09}, the K\"ahler form admits an expansion 
\begin{align}
\omega_{Y_i} = \frac{\sqrt{-1}}{2}(\partial\bar\partial|u_i|^2+\partial\bar\partial\eta_i),
\end{align}
where $\eta_i- c \log(|u|^2) = O(|u_i|^{-1})$, for some constant $c$. In fact, a similar expansion holds for any scalar-flat K\"ahler ALE surface on a resolution \cite{ALM_resolution}. 

Next, we construct a K\"ahler form on $X$. 
Choose two small positive numbers $a,b$, we glue the regions $\frac{1}{a}\leq |u_i|\leq \frac{4}{a}$ and $b\leq |z_i|\leq 4b$ , by letting $z_i = ab\cdot u_i$. This mapping is biholomorphic in the intersection. Let $\rho$ be a smooth cutoff function satisfying 
$\rho(t) = 1$ when $t\leq 1$, $\rho=0$ when $t\geq 2$.  Let 
\begin{align}
\begin{split}
\omega_b &= \begin{cases}\frac{\sqrt{-1}}{2}(\partial\bar\partial|z_i|^2+\partial\bar\partial((1-\rho(\frac{|z_i|}{2b}))\xi_i(z_i))) & \text{ if } |z_i|\leq b \\ \omega_{X_0} & \text{ if } |z_i|\geq 4b\end{cases} \\
\omega_a &= \begin{cases}\frac{\sqrt{-1}}{2}(\partial\bar\partial|u_i|^2+\partial\bar\partial(\rho(a|u_i|)\eta_i(u_i))) & \text{ if } |u_i|\geq 4a^{-1} \\ \omega_{Y_i} & \text{ if } |u_i|\leq a^{-1}\end{cases}.
\end{split}
\end{align}
Then we define
\begin{align}
\omega_{a,b} &= \begin{cases} a^{-2}b^{-2}\omega_{b} & \text{ if } |z_i|\geq 2b\\ \omega_a & \text{ if } |u_i|\leq 2a^{-1}\end{cases}.
\end{align} 
For $a, b$ sufficiently small, $\omega_{a,b}$  is a K\"ahler form on $X$.
Since $\omega_{X_0}$ was ALE of order $2$, the K\"ahler metric $\omega_{a,b}$ 
is also ALE of order $2$. 

Next, choose $R_1,R_2,R_3$, such that $0<2 R_1<R_2,R_3>0$, and define smooth functions $r_1,r_2,r_3$ as:
\begin{align}
\begin{split}
r_{1}(x) &= \begin{cases} |z_i| & \text{ if } |z_i|\leq R_1 \\ 1 & \text{ if } |z_i|\geq 2R_1\end{cases}\\
r_2(x) &= \begin{cases} 1 & \text{ if } |v|\leq R_2 \\ |v|  & \text{ if } |v|\geq 2R_2\end{cases}\\
r_3(x) &= \begin{cases} 1 & \text{ if } |u_i|\leq R_3 \\ |u|  & \text{ if } |u_i|\geq 2R_3\end{cases}
\end{split}
\end{align}
For $\delta \in \RR$, and weight function $\gamma > 0$, 
define the weighted H\"older space $C^{k,\alpha}_{\delta,\gamma}(M,T)$
of sections of any vector bundle $T$ over $M$ as the closure of
the space of $C^{\infty}$-sections in the norm 
\begin{align}
\begin{split}
&\Vert \sigma \Vert_{C^{k,\alpha}_{\delta,\gamma}(M,T)}  = \sum_{|\mathcal{I}|\leq k} |\gamma^{-\delta+|\mathcal{I}|}\nabla^\mathcal{I}\sigma|\\
&+ \sum_{|\mathcal{I}|=k}\sup_{0 < d(x,y) < \rho_{inj}}\Big( \min\{\gamma(x),\gamma(y)\}^{-\delta+k+\alpha}\frac{|\nabla^\mathcal{I}\sigma(x)-\nabla^\mathcal{I}\sigma(y)|}{d(x,y)^\alpha} \Big).
\end{split}
\end{align}
\begin{lemma}
Let $X_0$ be the LeBrun orbifold with quotient singularities $x_1, x_2, x_3$. The elliptic operator
\begin{align}
\label{Pdef}
P: C^{k,\alpha}_{\delta,r_1r_2}(X_0,&\Lambda^{0,1}\otimes\Theta)\xrightarrow{(\bar\partial^*,\bar\partial)} 
C^{k-1,\alpha}_{\delta-1,r_1r_2}(X_0,\Theta)\oplus C^{k-1,\alpha}_{\delta-1,r_1r_2}(X_0,\Lambda^{0,2}\otimes\Theta)
\end{align}
is Fredholm and surjective, where $\delta\in (-2,-1), k\geq 3$. 
\end{lemma}
\begin{proof}
First, note that that
any element of the kernel and cokernel is $\square$-harmonic. 
By the standard theory of harmonic functions, any $\square$-harmonic element which is $O(r_1^\delta)$ as $r_1 \rightarrow 0$ has a removable singularity.
The remainder of the proof is almost the same as the proof of \cite[Lemma~4.2]{HVI}, and is omitted. 
\end{proof}
We will now consider the weight function $\gamma : X \rightarrow \RR_+$ by the following: 
\begin{align}
\gamma &= \begin{cases} a^{-1}b^{-1}r_1r_2 & \text{ if } |z_i|\geq 2b \\ r_3 & \text{ if } |u_i|\leq 2a^{-1}\end{cases}.
\end{align}
Define the elliptic operator $P$ as $(\bar\partial^*,\bar\partial)$ with respect to the glued metric $\omega_{a,b}$, on the weighted space $C^{k,\alpha}_{\delta,\gamma}(X,\Lambda^{0,1}\otimes\Theta)$.
Because $\omega_{a,b}$ is a K\"ahler ALE metric, by \cite[Lemma~4.2]{HVI},
$P$ is Fredholm and surjective. 
Since each $P_{Y_i}$ has a bounded right inverse for $i = 1, 2, 3$, 
and $P_{X_0}$ has a bounded right inverse, a standard
argument (see for example \cite{RollinSinger}) shows that there is a uniformly bounded
right inverse of $P$, for $a, b$ sufficiently small. 

In \cite[Proposition 6.1]{LV14}, it was shown that $\dim(\ker(P_{X_0})) = b_\Gamma-1$, where $-b_\Gamma$ is the self-intersection number of the central divisor in $X$. For each Calderbank-Singer ALE space $Y_j$, by Lemma \ref{cyclic}, we have $\dim(\ker(P_{Y_j})) = \sum_{i=1}^{k_j}(e^j_{i}-1)$, where $-e^j_{i}$ is the self-intersection number of each irreducible exceptional divisor in $Y_j$. 

In the proof of \cite[Theorem~10.2]{HVI}, it was shown that the natural mapping 
\begin{align}
\label{natmap}
\H_{-3}(X,\Lambda^{0,1}\otimes\Theta) \rightarrow H^1(X,\Theta)
\end{align}
is surjective,
which implies that $\dim(H^1(X,\Theta))\leq \dim(\H_{-3}(X,\Lambda^{0,1} \otimes \Theta))$.
Also, from \eqref{es} above, we have $\dim(H^1(E,\oo_E(E)))\leq\dim(H^1(X,\Theta))$.
Combining these, we have that 
\begin{align}
\begin{split}
\dim(H^1(X,\Theta))&\leq \dim(\H_{-3}(X,\Lambda^{0,1}\otimes\Theta)) \\
&= \dim(\ker(P_{X})) =b_\Gamma-1+\sum_{j=1}^3\sum_{i=1}^{k_j}(e^j_{i}-1)\\
& = \dim(H^1(E,\oo_E(E)))\leq \dim(H^1(X,\Theta)).
\end{split}
\end{align}
This implies the isomorphism $H^1(X,\Theta)\simeq H^1(E,\oo_E(E))$. Then by the exact sequence \eqref{es}, $H^1(X,Der_E(X)) = 0$. 
\end{proof}

\section{Dimension of the moduli space}
\label{exsec}
We will next discuss the dimension of the generic orbit in the cases in Table \ref{dimtable}. 

\subsection{Discussion of Table \ref{dimtable}}
\label{distable}

Cases 1 and 2: $\Gamma = \frac{1}{p}(1,1)$. This case has been studied in \cite{HondaOn}. The group of biholomorphic automorphisms is ${\rm{GL}}(2,\CC)$, and the identity component of the holomorphic isometry group is ${\rm{U}}(2)$. When $p=3$, the action of ${\rm{U}}(2)$ coincides with the action of ${\rm{SU}}(2)$, and the dimension of generic orbits is $3$. Then $\dim(\mathfrak{M}) = 5-3 = 2$. When $p>3$, the dimension of each orbit is $4$, and $\dim(\mathfrak{M}) = 2p-5$.

\vspace{2mm}
\noindent
Case 3: $\Gamma = \frac{1}{p}(1,q)$, where $q\neq 1,p-1$. In this case, by direct calculation, the subgroup in ${\rm{U}}(2)$ that commutes with $\Gamma$ is isomorphic to $S^1\times S^1$. By \cite[Proposition~3.3]{ALM_resolution} the identity component of the 
holomorphic isometry group 
must be $S^1 \times S^1$. Using the fact that the cyclic quotient singularity is characterized by the invariant polynomials in \eqref{invpoly} above, it is easy to show that the dimension of the generic orbit of $\mathfrak{G}$ is $2$, and therefore $\dim(\mathfrak{M}) = j_\Gamma+k_\Gamma-2$.

\vspace{2mm}
\noindent
Case 4: $\Gamma$ is non-cyclic and not in ${\rm{SU}}(2)$. In this case, the subgroup in ${\rm{U}}(2)$ that commutes with $\Gamma$ is isomorphic to $S^1$, 
so by \cite[Proposition 3.3]{ALM_resolution} the identity component of the holomorphic isometry group must be $S^1$. Since the Hopf action is always nontrival on the normal bundle of the central divisor $E$, the dimension of the generic orbit of the Hopf action on 
$H^1(E, \mathcal{O}_E(E)) \cong H^0 (E, \mathcal{O}(b_{\Gamma} -2))$ 
is $1$, and therefore $\dim(\mathfrak{M}) = j_\Gamma+k_\Gamma-1$.

\subsection{Cyclic case}

Any cyclic action without complex reflections 
is conjugate to the action generated by 
\begin{align}
(z_1, z_2) \mapsto ( \xi_p z_1, \xi_p^q z_2),
\end{align}
where $\xi_p$ is a $p$th root of unity, and $q$ is relatively prime to $p$, which we will call a $\frac{1}{p} (1, q)$ action.
Define the integers $e_i \geq 2$, and $k$ by the continued fraction 
expansion 
\begin{align}
\label{fraction}
\frac{p}{q} = e_1 - \cfrac{1}{e_2 - \cdots \cfrac{1}{e_k}} \equiv [e_1, \dots, e_k].
\end{align}
The singularity of $\CC^2/\Gamma$ is known as a Hirzebruch-Jung singularity, and 
the exceptional divisor is a string 
of rational curves with normal crossing singularities. 

If $1 \leq q < p$, then let $q' = p -q$. Let 
$e_i' \geq 2$, and $k'$ denote integers arising in the 
the Hirzebruch-Jung algorithm for the $\frac{1}{p}(1, q')$-action.
In \cite{R1974}, Riemenschneider proved the formulas 
\begin{align}
\sum_{i = 1}^{k} (e_i -1) &= \sum_{i=1}^{k'} (e_i'-1),\\
k' &= e - 2,\\
e &= 3 + \sum_{i=1}^k (e_i -2),
\end{align}
where $e$ is the embedding dimension. 
In particular, these formulas give that 
\begin{align}
\sum_{i = 1}^{k} (e_i -1) = e + k - 3. 
\end{align}
From Subsection \ref{distable} above, for $q \neq 1, p-1$, 
it follows that
\begin{align}
m_{\Gamma} = 2 e_{\Gamma} + 3k_{\Gamma} - 8. 
\end{align}

\subsection{Non-cyclic cases}
The non-cyclic finite subgroups of ${\rm{U}}(2)$ without complex reflections
are given in Table \ref{groups},
where the binary polyhedral groups (dihedral, tetrahedral, octahedral, icosahedral) are 
respectively denoted by $D^*_{4n}$, $T^*$, $O^*$, $I^*$,
and the map 
$\phi:{\rm{SU}}(2) \times  {\rm{SU}}(2) \rightarrow {\rm SO}(4)$ denotes the usual double cover, see \cite{Brieskorn, BKR, LV14} for more details.

\begin{table}[ht]
\caption{Non-cyclic finite subgroups of ${\rm{U}}(2)$ containing no complex reflections}
\label{groups}
\begin{tabular}{ll l}
\hline
$\Gamma\subset{\rm U}(2)$ & Conditions & Order\\
\hline\hline
$ \phi(L(1,2 l) \times D^*_{4n}) $ &  $(l,2n) = 1$ & $4 l n$\\
$ \phi(L(1,2l) \times T^*) $  & $(l,6) = 1$ & $24l$\\
$  \phi(L(1,2l) \times O^*) $ & $(l,6) = 1$ & $48l$\\
$  \phi(L(1,2l)\times I^*) $ & $(l,30) = 1$ & $120l$ \\
 Index--$2$ diagonal $\subset   \phi(L(1,4l)\times D^*_{4n})$&  $(l,2) = 2,(l,n)=1$& $4ln$\\
 Index--$3$ diagonal $\subset  \phi(L(1,6l) \times T^*)$  & $(l,6)=3$ & $24l$.\\
\hline
\end{tabular}
\end{table}

First, consider the case of $\phi( L(1,2l) \times D^*_{4n})$, where 
$(l,2n) = 1$. In \cite{BKDihedral}, it is shown that 
\begin{align}
e_{\Gamma} = \sum_{i = 1}^{k} (e_i -2) + 3,
\end{align}
which implies that 
\begin{align}
\label{dform}
2 e_{\Gamma} + 3 k_{\Gamma} - 7 = 2 \sum_{i=1}^{k} (e_i -1) + k -1, 
\end{align}
which, from above, is equal to $m_{\Gamma}$.
For the index $2$ subgroup which is contained in 
$\phi(L(1,4l)\times D^*_{4n})$, where  $(l,2) = 2,(l,n)=1$,
a similar computation shows that the same formula 
\eqref{dform} holds in this case as well. 

\begin{table}[hb]
\caption{Cases with $T^*, O^*, I^*$ for $l > 1$}
\label{toi}
\begin{tabular}{ll}
\hline
$\Gamma\subset{\rm U}(2)$ &  $m_{\Gamma}$ \\
\hline\hline
$ \phi(L(1,2l) \times T^*) $  &   \\
 \hspace{3mm} $  l \equiv 1 \mod 6$  &  $\frac{1}{3} (l-1) + 17$   \\
 \hspace{3mm} $  l \equiv 5 \mod 6$  &  $\frac{1}{3} (l-5) + 15 $ \\
Index--$3$ diagonal $\subset  \phi(L(1,6l) \times T^*)$  & \\
\hspace{3mm} $(l,6) = 3$ &  $\frac{1}{3} (l-3) + 16$\\ 
$  \phi(L(1,2l) \times O^*) $ & \\
\hspace{3mm} $  l \equiv 1 \mod 12$  &  $\frac{1}{6} (l-1) + 20$   \\
\hspace{3mm} $  l \equiv 5 \mod 12$  &  $\frac{1}{6} (l-5) + 19$   \\
\hspace{3mm} $  l \equiv 7 \mod 12$  &  $\frac{1}{6} (l-7) + 18$   \\
\hspace{3mm} $  l \equiv 11 \mod 12$  &  $\frac{1}{6} (l-11) + 17$   \\
$  \phi(L(1,2l)\times I^*) $ &  \\
\hspace{3mm} $  l \equiv 1 \mod 30$  &  $\frac{1}{15} (l-1) + 23$  \\
\hspace{3mm} $  l \equiv 7 \mod 30$  &  $\frac{1}{15} (l-7) + 19$  \\
\hspace{3mm} $  l \equiv 11 \mod 30$  &  $\frac{1}{15} (l-11) + 22$  \\
\hspace{3mm} $  l \equiv 13 \mod 30$  &  $\frac{1}{15} (l-13) + 19$  \\
\hspace{3mm} $  l \equiv 17 \mod 30$  &  $\frac{1}{15} (l-17) + 18$  \\
\hspace{3mm} $  l \equiv 19 \mod 30$  &  $\frac{1}{15} (l-19) + 20$  \\
\hspace{3mm} $  l \equiv 23 \mod 30$  &  $\frac{1}{15} (l-23) + 18$  \\
\hspace{3mm} $  l \equiv 29 \mod 30$  &  $\frac{1}{15} (l-29) + 19$  \\
\hline
\end{tabular}
\end{table}

 Table \ref{toi} lists the dimension of the moduli space for 
subgroups of ${\rm{U}}(2)$ for finite subgroups involving 
$T^*, O^*, I^*$. The papers of \cite{Brieskorn, BKR, LV14} give a complete
description of the exceptional divisors and self-intersection numbers, 
and it is a straightforward computation to obtain the right column in the table, which, from Subsection~\ref{distable} is equal $j_{\Gamma} + k_{\Gamma} - 1$. Furthermore, using the formulas for the 
embedding dimension given in \cite{BKR}, it can be easily checked that
\begin{align}
m_{\Gamma} = j_{\Gamma} + k_{\Gamma} -1
=2 e_{\Gamma} + 3k_{\Gamma} - 7
\end{align}
in all of these cases (the computations are omitted). We point out that there is a typo in case $I_7$ in \cite{BKR}: in our notation, this case should have $l = 30(b-2) +7$ and $e = b +2$.

\subsection{Hyperk\"ahler case} 
Recall that a hyperk\"ahler metric is K\"ahler with respect to a $2$-sphere
of complex structures $S^2 = \{aI+bJ+cK: a^2+b^2+c^2=1\}$. 

\vspace{3mm}
\noindent
$A_1$-type:  the Eguchi-Hanson metric is an ALE Ricci-flat K\"ahler metric on $X = T^*S^2$. Since $k=1$, $B^1\subset \RR^2$, $B^2\subset \RR$, $d_\Gamma = 2+1 = 3$. 
With respect to the complex structure $I$ (the complex structure arising as
the total space of a holomorphic line bundle), 
the biholomorphic isometry group is ${\rm{U}}(2)$. 
The quotient $\mathfrak{F} / \mathfrak{G}$ has two orbit-types. 
The orbit of $(0, \rho)$ is $1$-dimensional. The 
orbit of $(t,\rho)$ where $t$ is non-zero, is also $1$-dimensional. 
Consequently,  $\mathfrak{M}$ is isomorphic to 
the $2$-dimensional upper half space. The remaining parameter of 
complex structures just corresponds to a hyperk\"ahler rotation, 
so the metrics obtained are all just scalings of the Eguchi-Hanson metric. 

Instead, consider the complex structure $J$. 
The biholomorphic isometry group is ${\rm{SU}}(2)$. The subspace $\VV$ 
is now of dimension $1$, so our parameter space is now
$\RR \times \RR$. The group $\mathfrak{G}$ now acts trivially,
so our parameter space is a ball in $\RR^2$. The remaining parameter of 
complex structures again just corresponds to a hyperk\"ahler rotation, 
so the metrics obtained are all just scalings of the Eguchi-Hanson metric.

\vspace{3mm}
\noindent
ADE-type: For the general $A_k, D_k, E_k$ $(k\geq 2)$ type ALE minimal resolution, the dimension of local moduli space of Ricci-flat K\"ahler metrics is $3k-3$.

For $A_k$ $(k\geq 2)$, this is the case of Gibbons-Hawking ALE hyperk\"ahler surface. $Aut(X) = \CC^*\times S^1\subset \RR_+ \times {\rm{U}}(2)$. Recall that ${\rm{U}}(2) = {\rm{U}}(1)\times_{\ZZ_2} {\rm{SU}}(2)$, which acts on $\vec{v}\in \CC^2$ as $g_L\cdot\vec{v}\cdot g_R$, where $g_L\in {\rm{U}}(1)$ is the left action and $g_R\in {\rm{SU}}(2)$ is the right action.
The $\CC^*$-action is generated by $(v_1,v_2)\rightarrow (\lambda v_1, \lambda v_2)$ where $\lambda\in \CC^*$; the $S^1$-action is generated by $(v_1,v_2)\rightarrow (\lambda v_1, \lambda^{-1} v_2))$ where $\vert \lambda\vert =1$.  The $\CC^*$ action induces a $2$-dimensional action on the hyperk\"ahler sphere, while the $S^1$ action preserves the hyperk\"ahler structure. Then $m_\Gamma = d_\Gamma-3 = 3k-3$.

For the case of $D_k, E_k$, $Aut(X) = \CC^*$. The $\CC^*$ action can be interpreted as follows: let $\mathfrak{g}_{\CC^*}$ denote the set of real vector fields which correspond to the Lie algebra of $\CC^*$. For any $Y\in\mathfrak{g}_{\CC^*}$, $\Phi^*_Y$ acts on the complex structures which gives an action on $B^1$. Since $Y$ is a real vector field, $\Phi^*_Y$ is transverse to the action on the hyperk\"ahler sphere (it is not transverse only in the $A_k$ case). Then the dimension of the maximal orbit generated by the $\CC^*$ action and the action on hyperk\"ahler sphere is $3$, so $m_\Gamma = d_\Gamma-3$.

\bibliography{Han_Viaclovsky}

\end{document}